
\documentclass{amsart}

\usepackage{amsmath,amssymb,amsthm}
\usepackage{pinlabel}

\hyphenation{mani-fold mani-folds sub-mani-fold sub-mani-folds topo-logy
Topo-logy geo-metry Geo-metry}

\newtheorem{prop}{Proposition}
\newtheorem{thm}[prop]{Theorem}
\newtheorem{lem}[prop]{Lemma}
\newtheorem{cor}[prop]{Corollary}

\theoremstyle{definition}

\newtheorem{rem}[prop]{Remark}

\newtheorem*{ack}{Acknowledgements}


\def\co{\colon\thinspace}

\newcommand{\C}{\mathbb C}
\newcommand{\CP}{\mathbb{C}\mathrm{P}}

\newcommand{\rme}{\mathrm e}

\newcommand{\rmi}{\mathrm i}

\newcommand{\MM}{\mathcal M}
\newcommand{\wtMM}{\widetilde{\mathcal M}}

\newcommand{\R}{\mathbb R}
\newcommand{\RP}{\mathbb{R}\mathrm{P}}

\newcommand{\wtW}{\widetilde{W}}

\newcommand{\oz}{\overline{z}}
\newcommand{\Z}{\mathbb Z}

\newcommand{\lra}{\longrightarrow}

\newcommand{\ip}{\,\rule{2.3mm}{.2mm}\rule{.2mm}{2.3mm}\; }

\DeclareMathOperator{\Aut}{Aut}

\DeclareMathOperator{\ev}{ev}

\DeclareMathOperator{\FS}{FS}

\DeclareMathOperator{\Int}{Int}


\begin{document}

\author[M.~D\"orner]{Max D\"orner}
\author[H.~Geiges]{Hansj\"org Geiges}
\author[K.~Zehmisch]{Kai Zehmisch}
\address{Mathematisches Institut, Universit\"at zu K\"oln,
Weyertal 86--90, 50931 K\"oln, Germany}
\email{mdoerner@math.uni-koeln.de}
\email{geiges@math.uni-koeln.de}
\email{kai.zehmisch@math.uni-koeln.de}

\title{Open books and the Weinstein conjecture}

\date{}

\begin{abstract}
We show the existence of a contractible periodic Reeb orbit
for any contact structure supported by an open book whose binding
can be realised as a hypersurface of restricted contact type in a
subcritical Stein manifold. A key ingredient in the proof
is a higher-dimensional version of Eliashberg's theorem
about symplectic cobordisms from a contact manifold to
a symplectic fibration.
\end{abstract}

\subjclass[2010]{53D35; 37C27, 37J45, 57R17}


\maketitle


\section{Introduction}
A contact manifold $(M,\xi)$ is said to satisfy the
Weinstein conjecture~\cite{wein79} if for every contact form $\alpha$
defining $\xi=\ker\alpha$ the corresponding Reeb vector field
$R_{\alpha}$ has a closed orbit, cf.~\cite{pasq12}. In a seminal paper,
Hofer~\cite{hofe93} developed a method to approach this
conjecture in dimension~$3$ via the study of holomorphic discs in the
symplectisation of $(M,\xi)$. This allowed him to prove the conjecture
for overtwisted contact $3$-manifolds and contact structures on
$3$-manifolds with non-vanishing second homotopy group.
In \cite{geze} we explored the ramifications of Hofer's method
in dimension~$3$ (including applications to symplectic $4$-manifolds).
In~\cite{geze12} we extended this approach to higher dimensions.
Using a capping construction that can be traced back to
McDuff~\cite{mcdu91} we were able to work with holomorphic spheres
rather than discs, which simplifies the holomorphic analysis.

In dimension~$3$ the Weinstein conjecture has now been answered
in the affirmative by Taubes~\cite{taub07}, cf.~\cite{hutc10},
with the help of Seiberg--Witten theory. Prior to the work of Taubes, 
arguably the most important
advance on the $3$-dimensional Weinstein conjecture had been due
to Abbas--Cieliebak--Hofer~\cite{ach05}, who established the
conjecture for all contact structures supported
(in the sense of Giroux~\cite{giro02})
by a planar open book. 

This paper continues the programme begun in~\cite{geze12}
and is close in spirit to the work of Abbas et al.,
in the sense that we establish the Weinstein conjecture
for contact structures (in dimension $\geq 5$) supported
by suitable open books. Specifically,
the property we require is that the binding of the open book
be realisable as a hypersurface of restricted contact type
in a subcritical Stein manifold,
see Theorem~\ref{thm:main}. (Recall that a Stein manifold
of real dimension $2n$ has the homotopy type of an $n$-dimensional complex.
It is called subcritical if its handle decomposition
only contains handles up to dimension~$n-1$.)
According to Giroux, any compact contact manifold $(M^{2n+1},\xi)$
admits a supporting open book whose pages are Stein manifolds.
Thus, the assumptions of our existence theorem for
periodic Reeb orbits will in particular be
satisfied if the pages are subcritical.
\section{The main theorem}
\label{section:main}

Let $M$ be a closed, oriented manifold of dimension $2n+1\geq 5$
admitting a (cooriented) contact structure $\xi=\ker\alpha$, i.e.\
$\alpha\wedge (d\alpha)^n>0$. The Reeb vector field $R$ of the contact
form $\alpha$ is given by $R\ip d\alpha\equiv 0$ and $\alpha(R)\equiv 1$.

Recall that an open book
decomposition of $M$ is a pair $(B,\theta)$ consisting
of an oriented codimension~$2$ submanifold $B\subset M$ with trivial
normal bundle and a locally trivial fibration
$\theta\co M\setminus B\rightarrow S^1$ given in a neighbourhood
$B\times D^2\subset M$ of $B$ by the angular coordinate in
the $D^2$-factor. The manifold $B$ is called the \emph{binding};
the closures $\Sigma$ of the fibres of $\theta$
are referred to as the \emph{pages} of the open book. The pages
are oriented consistently with the orientation of their boundary~$B$.

The contact structure $\xi$ is said to be \emph{supported}~\cite{giro02}
by the open book decomposition $(B,\theta)$ if a contact form $\alpha$
can be chosen such that
\begin{itemize}
\item[(ob-i)] the $2$-form $d\alpha$ induces a positive symplectic form on
each fibre of~$\theta$, and
\item[(ob-ii)] the $1$-form $\alpha$ induces a positive contact form on~$B$.
\end{itemize}

\begin{thm}
\label{thm:main}
Let $(M,\xi)$ be a contact manifold of dimension $\geq 5$
with a supporting open book $(B,\theta)$ whose
binding $B$ (with the induced contact structure)
embeds as a hypersurface of restricted contact type into
a subcritical Stein manifold. Then any contact form defining $\xi$ has
a contractible periodic Reeb orbit.
\end{thm}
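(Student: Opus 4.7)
The plan is to follow the scheme of~\cite{geze12}: build a symplectic cobordism from $(M,\xi)$ to a symplectic fibration, cap it off using the subcritical Stein filling of the binding to produce a closed symplectic manifold $X$, and then extract a contractible periodic Reeb orbit from a non-compactness argument applied to a suitable moduli space of holomorphic spheres in~$X$.

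First I would normalise the given contact form $\alpha$ near $B$ so that on a tubular neighbourhood $B\times D^2$ with polar coordinates $(r,\varphi)$ on $D^2$ it takes the Giroux model form $\alpha=\alpha_B+r^2\,d\varphi$, where $\alpha_B$ is the induced contact form on~$B$. This gives precise control of the Reeb dynamics near the binding. Using the higher-dimensional version of Eliashberg's theorem promised in the abstract, I would then construct a symplectic cobordism whose concave end is $(M,\alpha)$ and whose convex end is a symplectic fibration over $S^1$ with fibre the symplectic completion of the page~$\Sigma$, the binding neighbourhood being spread out into the cylindrical monodromy region.

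Next, exploit the hypothesis that $B$ embeds as a hypersurface of restricted contact type in a subcritical Stein manifold~$V$: let $\wtW\subset V$ be the subcritical Stein domain bounded by $B$. Restricted contact type is precisely the condition that allows one to excise a half-infinite cylindrical piece of the symplectisation of $(B,\alpha_B)$ inside the completed page and replace it symplectically by~$\wtW$, yielding a compact capped page $\widetilde{\Sigma}$. Gluing these capped pages into the trivial fibration over $D^2$ closes up the convex end into a symplectic fibration over $S^2$; the total space is a closed symplectic manifold $X$ containing $M$ as a separating hypersurface. Because $\wtW$ is subcritical, the McDuff-type capping construction of~\cite{geze12} furnishes on $X$ a distinguished nonempty moduli space $\MM$ of $J$-holomorphic spheres in a fibre-like homology class, for a suitable tame almost complex structure $J$ that is cylindrical near~$M$.

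Finally, a neck-stretching / SFT compactness argument along $M$ would force $\MM$ to degenerate: if the Reeb flow of $\alpha$ had no contractible periodic orbit, the only possible limit curves would be holomorphic spheres staying in one side of $M$, and this can be ruled out by a homological index and energy computation, exactly as in~\cite{geze12}. Therefore some sequence in $\MM$ must break, producing a punctured holomorphic sphere in the symplectisation of $(M,\xi)$ asymptotic to a periodic Reeb orbit; sphericity of the total class forces the orbit to be contractible in~$M$. The principal obstacle is the symplectic-geometric construction, namely verifying that restricted contact type is strong enough to glue $\wtW$ into the capped page, that the resulting $X$ admits a $J$ which is simultaneously cylindrical near $M$ and tame on the capping regions, and that the moduli space $\MM$ is large enough to force bubbling while small enough to prevent escape of curves to infinity in the caps; the analytic core, once this geometric setup is in place, parallels the arguments of~\cite{geze12} and~\cite{ach05}.
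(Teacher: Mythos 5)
There are genuine gaps, and the central one is that you use the subcritical Stein hypothesis the wrong way round. You propose to take the Stein domain bounded by $B$ (a \emph{filling}, with $B$ as convex boundary) and glue it to the positive cylindrical end of the completed page. But the page $\Sigma$ has \emph{convex} contact boundary $B$ as well, so this is a convex-to-convex gluing and is not a symplectic operation; what is needed is a \emph{concave} cap for $(B,\alpha_B)$. The paper obtains it from the region \emph{outside} $B$: by Cieliebak's theorem the subcritical Stein manifold is split, $V\times\C$, and one takes the cobordism $A_{BS}$ from $B$ out to a high level set $S$ (here is where restricted contact type is used, to get a Liouville field transverse to $B$ pointing outwards) and then compactifies the $\C$-factor of $\psi^{-1}([c,\infty))$ to $\CP^1$. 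This cap $C_\infty$ is not an incidental device: the $\CP^1$-factor spheres $\{p\}\times\CP^1\times\{w\}$ are precisely the holomorphic spheres generating the moduli space, and the complex hypersurface $V_\infty=V\times\{\infty\}$, which has intersection number $1$ with the sphere class, is what makes the spheres simple and kills bubble trees. Your proposal never identifies where the spheres come from (a ``fibre-like homology class'' cannot work, since the fibres are $2n$-dimensional), so the analytic engine of the argument is missing.

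A second gap is the claim that one can close up the symplectic fibration over $S^1$ by a trivial fibration over $D^2$ to obtain a closed symplectic manifold $X$ fibred over $S^2$: this requires the monodromy (essentially the open book monodromy, extended by the identity over the cap) to be symplectically trivial, which fails in general. The paper deliberately does not close up; it keeps $N$ as a boundary component carrying a stable Hamiltonian structure, chooses $J$ so that the fibres in a collar of $N$ are holomorphic, and uses that the sphere class $[F]$ has zero intersection with the fibres, so by positivity of intersection no curve in the relevant moduli space $\MM_{\gamma}$ can approach $N$; non-compactness of $\MM_{\gamma}$ then forces SFT breaking at the negative end, and an exactness/Stokes argument produces a finite energy plane with a positive puncture in a symplectisation level, whose asymptotic Reeb orbit is contractible because the plane itself is a nullhomotopy. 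Finally, a smaller but real point: an arbitrary contact form defining $\xi$ cannot be normalised to the Giroux model near $B$ (only forms adapted to the open book can); the paper handles arbitrary forms by appending a negative half-symplectisation to the concave end and realising the given form, up to a constant, on a graph-like hypersurface there.
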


Note that the theorem claims the existence of a closed Reeb orbit for
\emph{any} contact form defining~$\xi$, not only those
adapted to the open book in the sense of (ob-i) and (ob-ii).
\section{A neighbourhood theorem}
\label{section:nbhd-B}
Our aim in this section is to prove a neighbourhood theorem for
the binding $B$ of an open book.
So we consider contact structures on $B\times D^2$ that satisfy
the compatibility conditions (ob-i) and (ob-ii) with the open book structure
on this neighbourhood of the binding.

For the purposes of this section we call
\[ \Sigma_{\theta}:=\{ (b;r\rme^{\rmi\theta})\in B\times D^2\co
b\in B,\, 0\leq r\leq 1\}\]
the `page', and we identify
$B$ with $B\times\{0\}\subset B\times D^2$.

\begin{prop}
\label{prop:normal}
Let $\alpha$ be a contact form on $B\times D^2$ with the following
properties:
\begin{itemize}
\item[(i)] The $1$-form $\alpha_B:=\alpha|_{TB}$ is a contact form on~$B$.
\item[(ii)] For each $\theta\in [0,2\pi)$, the $2$-form
$d\alpha|_{T\Sigma_{\theta}}$ is a symplectic form on $\Sigma_{\theta}
\setminus B$.
\item[(iii)] With the orientations of $B$ and $\Sigma_{\theta}$ induced by
$\alpha_B$ and $d\alpha|_{T\Sigma_{\theta}}$, respectively, $B$ is oriented
as the boundary of $\Sigma_{\theta}$.
\end{itemize}
Then for $\varepsilon >0$ sufficiently small
there is a page-preserving embedding
\[ B\times D^2_{\varepsilon}\longrightarrow B\times D^2\]
which is the identity on $B\times\{ 0\}$ and pulls back $\alpha$ ---
after an isotopic modification of $\alpha$ near $B\times\{0\}$
via forms satisfying {\rm (i)} (up to constant scale)
to {\rm (iii)} --- to a $1$-form
\[ h_1(r)\, \alpha_B+h_2(r)\, d\theta\]
satisfying
\begin{itemize}
\item $h_1(0)>0$, $h_2(r)=r^2$ near $r=0$,
\item $h_1^{n-1}(h_1h_2'-h_1'h_2)>0$ (the contact condition),
\item $h_1'<0$ for $r>0$ (the symplectic condition on the pages).
\end{itemize}
\end{prop}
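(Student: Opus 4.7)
The plan is to combine a page-preserving change of coordinates with a Moser-type isotopy of contact forms. After the rescaling allowed in~(i), I may assume $\alpha|_{T(B\times\{0\})} = \alpha_B$, so that the target becomes $h_1(0)=1$; the general case differs only by a constant conformal factor. By (i)--(iii), the $1$-jet of $\alpha$ along $B$ is determined up to the choice of transverse symplectic form on the disk fibres. A preliminary fibrewise radial rescaling (a page-preserving diffeomorphism that is the identity on $B$) brings this transverse form into agreement with the transverse form of the model $\alpha_0 := h_1(r)\,\alpha_B + h_2(r)\, d\theta$, for a convenient choice such as $h_1(r) = 1 - r^2/2$, $h_2(r) = r^2$ on a small disk.

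Next I would apply Moser's trick to the linear family $\alpha_t := (1-t)\alpha + t\alpha_0$. Since $\alpha_t|_{TB} = \alpha_B$ for all $t$ and the open-book compatibility conditions are open, after shrinking the neighbourhood $\alpha_t$ satisfies (i)--(iii) throughout $B \times D^2_\varepsilon$. I would solve
\[
X_t \ip d\alpha_t + d(X_t \ip \alpha_t) = \mu_t\, \alpha_t - \dot\alpha_t
\]
for a time-dependent vector field $X_t$ and function $\mu_t$, subject to the constraints that $X_t$ has no $\partial_\theta$-component (so its flow preserves the pages) and vanishes along $B$ (so its flow fixes $B$ pointwise). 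These constraints are compatible with the equation because the right-hand side can be arranged to vanish on $TB$ by appropriate choice of $\mu_t$. The time-$1$ flow $\phi_1$ then satisfies $\phi_1^*\alpha = f \cdot \alpha_0$ for some positive function $f$, and absorbing $f$ via a final radial reparametrization together with a further isotopic modification of $\alpha$ within the class satisfying (i)--(iii) yields the desired normal form.

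To verify the listed inequalities, a direct expansion gives
\[
\alpha_0 \wedge (d\alpha_0)^n = n\, h_1^{n-1}(h_1 h_2' - h_1' h_2)\, \alpha_B \wedge (d\alpha_B)^{n-1} \wedge dr \wedge d\theta,
\]
which is exactly the contact condition, while
\[
(d\alpha_0)^n|_{\Sigma_\theta} = -n\, h_1^{n-1}\, h_1'\, \alpha_B \wedge (d\alpha_B)^{n-1} \wedge dr,
\]
so (ii) together with the orientation convention in~(iii) forces $h_1' < 0$ for $r > 0$. The main obstacle I expect is arranging that Moser's equation admits a page-preserving solution vanishing along $B$; this reduces to a pointwise linear problem solvable thanks to the non-degeneracy of the contact structure $\alpha_t$ and the fact that $\alpha$ and $\alpha_0$ agree to first order on $B$.
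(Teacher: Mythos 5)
Your plan founders at the very first step, and that step is the actual content of the proposition. Conditions (i)--(iii) do \emph{not} determine the $1$-jet of $\alpha$ along $B$ ``up to the transverse symplectic form'': they constrain only $\alpha|_{TB}$ and the behaviour of $d\alpha$ on the punctured pages, so along $B$ the form $\alpha$ may well have nonvanishing $dr$- and $d\theta$-components, and $\ker\alpha\cap T\Sigma_\theta$ need not be radial. A fibrewise radial rescaling cannot remove such components, so after your preliminary step $\alpha$ and the model $\alpha_0$ in general do not even agree on $T(B\times D^2)|_B$; consequently the linear family $\alpha_t=(1-t)\alpha+t\alpha_0$ need not be contact at points of $B$ itself, and no shrinking of the neighbourhood repairs this. (Your appeal to openness only uses $\alpha_t|_{TB}=\alpha_B$, which is far weaker than agreement on $T(B\times D^2)|_B$.) The paper's proof exists precisely to produce this normalisation: one flows along the characteristic vector fields $X_\theta$ of the pages, which satisfy $X_\theta\in\ker\beta_\theta$ and $L_{X_\theta}\beta_\theta=\mu_\theta\beta_\theta$ for $\beta_\theta=\alpha|_{T\Sigma_\theta}$, so that the resulting page-preserving embedding automatically kills the $dr$-component and yields $\ker(\Psi^*\alpha)=\ker(\alpha_B+h\,d\theta)$; even there an extra device (a smooth $D^2$-family of vector fields replacing the $S^1$-family $X_\theta$) is needed to get a smooth embedding.

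The constrained Moser step is also not justified. Writing $X_t=Y_t+g_tR_t$ with $Y_t\in\ker\alpha_t$, the equation $X_t\ip d\alpha_t+d(\alpha_t(X_t))=\mu_t\alpha_t-\dot\alpha_t$ determines $\mu_t$ and $Y_t$ once the function $g_t=\alpha_t(X_t)$ is chosen; your additional requirement that $X_t$ have no $\partial_\theta$-component is then a first-order PDE for $g_t$ (through the $dg_t$-term), not a ``pointwise linear problem'', and you give no reason it is solvable. In general the Gray/Moser vector field of an arbitrary interpolation is not tangent to the pages, and without page preservation the resulting normal form is useless for the open book. The paper sidesteps this by applying Gray stability only to the interpolation of $\alpha_B+h\,d\theta$ with $\alpha_B+r^2\,d\theta$, where the flow can be taken tangent to $\partial_r$ and fixed along $B$. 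Finally, the conformal factor $f$ in $\phi_1^*\alpha=f\alpha_0$ depends on all variables, not just $r$, so it cannot be ``absorbed by a radial reparametrization''; the paper instead performs the allowed modification of $\alpha$ near $B$ explicitly, replacing the coefficient by $\lambda(r)(c-r^2)+(1-\lambda(r))f$ and checking, for $c$ large, that the interpolation stays within the class (i)--(iii) and has negative $r$-derivative --- a verification your ``further isotopic modification'' omits. Your closing computations of the contact and page-symplectic conditions for the model are correct, but they are the routine part of the argument.
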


\begin{proof}
The idea of the proof (due to Giroux) is to identify a neighbourhood
of the binding with $B\times D^2_{\varepsilon}$ in such a way that the
radial direction corresponds to the (negative) characteristic direction
in the pages.

Choose volume forms $\Omega_{\theta}$ on the pages $\Sigma_{\theta}$
compatible with the orientation of the pages, e.g.\
\[ \Omega_{\theta}=-(\cos\theta\, dx+\sin\theta\, dy)\wedge
\alpha_B\wedge (d\alpha_B)^{n-1}.\]
Write $\beta_{\theta}:=\alpha|_{T\Sigma_{\theta}}$ for the
restriction of the contact form $\alpha$ to the page $\Sigma_{\theta}$.
The characteristic vector field $X_{\theta}$ is the vector
field tangent to $\Sigma_{\theta}$ defined by
\[ X_{\theta}\ip\Omega_{\theta}=
-\beta_{\theta}\wedge (d\beta_{\theta})^{n-1},\]
see \cite{giro91} and \cite[Section~2.5.4]{geig08}.
By conditions (i) and (iii), this vector field $X_{\theta}$ is transverse
to $B\subset\Sigma_{\theta}$ and points into the page.

Along the binding we have a whole $S^1$-family of vector fields,
so it is not immediately clear that the flow of the $X_{\theta}$
defines a smooth embedding. To circumvent this problem, we
replace the time~$r$ flow of the $X_{\theta}$ by the time~$1$ flow
of a smooth $D^2$-family of `honest' vector fields on $B\times D^2$.
This was suggested to us by Fan Ding.

Write the points in the parameter space $D^2$ in polar
coordinates as $R\rme^{\rmi\varphi}$.
Define a smooth $D^2$-family of smooth
vector fields on $B\times D^2$ by
\[ \tilde{X}_{R\rme^{\rmi\varphi}}\ip\bigl( dx\wedge dy\wedge
\alpha_B\wedge (d\alpha_B)^{n-1}\bigr) =
(R\sin\varphi\; dx-R\cos\varphi\; dy)\wedge\alpha\wedge (d\alpha)^{n-1}.\]
We claim that
\[ \tilde{X}_{R\rme^{\rmi\theta}}|_{\Sigma_{\theta}}=RX_{\theta}.\]
Indeed, by taking the wedge product of the defining equation for $X_{\theta}$
with $\sin\theta\, dx-\cos\theta\, dy$ and observing that
\[ T\Sigma_{\theta}\subset\ker (\sin\theta\, dx-\cos\theta\, dy), \]
we have
\[ X_{\theta}\ip \bigl( dx\wedge dy
\wedge
\alpha_B\wedge (d\alpha_B)^{n-1}\bigr)= 
(\sin\theta\, dx-\cos\theta\, dy)\wedge \alpha\wedge
(d\alpha)^{n-1},\]
from which the claim follows.

Write $\tilde{\psi}_t^{R,\varphi}$ for the time~$t$
flow of $\tilde{X}_{R\rme^{\rmi\varphi}}$, and $\psi_t^{\theta}$
for the time~$t$ flow of $X_{\theta}$ (along $\Sigma_{\theta}$).
For $\varepsilon>0$ sufficiently small,
this defines an embedding
\[ \begin{array}{rccc}
\Psi\co & B\times D^2_{\varepsilon} & \longrightarrow &
                           B\times D^2\\
        & (b,r\rme^{\rmi\theta})                      & \longmapsto     &
                           \tilde{\psi}_1^{r,\theta}(b,0),
\end{array}\]
where $D^2_{\varepsilon}$ denotes the $2$-disc of radius~$\varepsilon$.

We maintain that
\[ \ker\bigl(\Psi^*\alpha\bigr)=\ker (\alpha_B+h\, d\theta)\]
for some smooth function $h$ on $B\times D^2_{\varepsilon}$
that makes the $1$-form $h\, d\theta$ well defined (in particular,
$h=0$ along~$B$) and smooth.
Indeed, a straightforward computation, cf.~\cite[Lemma~2.5.20]{geig08},
gives
\[ X_{\theta}\in\ker\beta_{\theta}\;\;\;\text{and}\;\;\;
L_{X_{\theta}}\beta_{\theta}=\mu_{\theta}\beta_{\theta}\]
for some smooth function $\mu_{\theta}\co\Sigma_{\theta}\rightarrow\R$.
Since
\[ \tilde{\psi}_1^{r,\theta}(b,0) = \psi_r^{\theta}(b,0),\]
we have $T\Psi(\partial_r|_{\Sigma_{\theta}})=X_{\theta}$.
So the first condition on $X_{\theta}$ means that
$\Psi^*\alpha$ contains no $dr$-component; the second condition
means that $\ker\bigl(\Psi^*\alpha|_{T\Sigma_{\theta}}\bigr)$
is invariant under the flow of $\partial_r$. This proves the claim.

The standard Gray stability argument applied to the linear
interpolation of the contact forms $\alpha_B+h\, d\theta$
and $\alpha_B+r^2\, d\theta$ gives a flow fixed at $B$ and tangent to
the $\partial_r$-direction which isotopes the contact structure
$\ker (\alpha_B+h\, d\theta)$ to $\ker(\alpha_B+r^2\, d\theta)$.

In sum, we have an embedding $B\times D^2_{\varepsilon}
\rightarrow B\times D^2$ (possibly
for some smaller $\varepsilon >0$) that pulls back
$\alpha$ to a $1$-form
$f(\alpha_B+r^2\, d\theta)$ with a positive function
$f$ satisfying $f\equiv 1$ along $B\times\{0\}$. Observe that
conditions (ii) and (iii) are equivalent to $\partial f/\partial r<0$.
For ease of notation, we continue to write $\alpha$ for this
pulled-back form.

Now let $\lambda\co [0,\varepsilon]\rightarrow[0,1]$ be
a smooth function that is identically $1$ near $r=0$ and identically $0$
near $r=\varepsilon$. Replace $\alpha$ by
\[ \bigl(\lambda(r)(c-r^2)+(1-\lambda(r))f\bigr)
\, (\alpha_B+r^2\, d\theta),\]
where $c$ is a positive constant. This new $1$-form coincides
with $\alpha$ away from $B\times\{0\}$, and is of the
desired form $h_1(r)\,\alpha_B+h_2(r)\, d\theta$
near $B\times\{0\}$. For $c$ sufficiently large, the
coefficient function $\lambda(r)(c-r^2)+(1-\lambda(r))f$
has negative $r$-derivative, so condition (ii)
will be satisfied. The linear interpolation between $\alpha$ and this
modified form is via forms satisfying (i) to (iii).
\end{proof}

\begin{rem}
\label{rem:h}
For the capping construction below it is convenient to make
a specific choice of the functions $h_1,h_2$ subject to
the following conditions:
\begin{itemize}
\item[(i)] $h_1(r)=c\rme^{-r^2}$ near $r=0$ for some constant~$c$,
\item[(ii)] $h_1(r)=c\rme^{-r}$ and $h_2(r)$ constant for $r$ near some
$r_0>0$.
\end{itemize}
Figure~\ref{figure:interpolate} illustrates that for $c$ sufficiently
large it is possible to find such a pair $(h_1,h_2)$ that can be
interpolated to the given pair (drawn as a dashed line) via
pairs satisfying the conditions stipulated in the preceding lemma.
\end{rem}

\begin{figure}[h]
\labellist
\small\hair 2pt
\pinlabel $h_1$ [t] at 384 37
\pinlabel $h_2$ [r] at 37 276
\endlabellist
\centering
\includegraphics[scale=0.4]{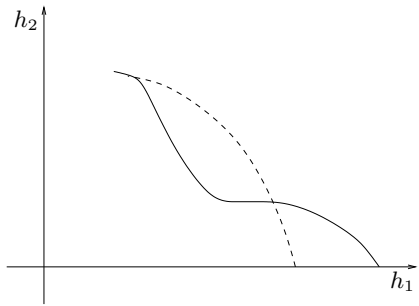}
  \caption{How to choose a specific $(h_1,h_2)$.}
  \label{figure:interpolate}
\end{figure}

\begin{rem}
An open book can alternatively be described by a
compact $2n$-dimen\-sional manifold $\Sigma$ with non-empty
boundary $\partial\Sigma=:B$, and a monodromy diffeomorphism
$\phi$ of $\Sigma$, equal to the identity near~$\partial\Sigma$.
The closed manifold $M$ obtained from the mapping cylinder
of $(\Sigma,\phi)$ (which has boundary
$\partial\Sigma\times S^1$) by attaching $\partial\Sigma\times D^2$
then has an open book decomposition $(B,\theta)$.
Conversely, from $(B,\theta)$ one can recover $(\Sigma,\phi)$,
cf.~\cite[Section~4.4.2]{geig08}.

A construction of Giroux~\cite{giro02},
cf.~\cite[Section~7.3]{geig08}, produces a contact
structure adapted to an open book, provided the page is exact symplectic
with contact type boundary, and the monodromy is symplectic.
Here the contact structure has, by construction, the normal form
described in Proposition~\ref{prop:normal}.
So a consequence of the proposition is that any
contact structure supported by an open book can be obtained
via Giroux's construction.
\end{rem}
\section{From open books to symplectic fibrations}
The following theorem and its corollary are higher-dimensional analogues
of \cite[Theorem~1.1]{elia04}, which was the crucial
part in Eliashberg's construction of symplectic caps for
weak symplectic fillings. Our proof is close in spirit
to that of Eliashberg.

In the theorem and throughout this section, $(M,\xi=\ker\alpha)$
will be a closed contact manifold and $(B,\theta)$ an open book
decomposition of $M$ to which the contact form $\alpha$ is adapted.
We write $\Sigma$ for the page of the open book
decomposition, and $\alpha_B$ for the restriction of $\alpha$ to~$TB$.

\begin{thm}
\label{thm:cobordism}
Suppose there is a (not necessarily compact) symplectic manifold
$(C,\omega_C)$ with strong concave boundary that is strictly
contactomorphic to $(B,\alpha_B)$.

Then there is a (not necessarily compact) symplectic manifold $(W,\omega)$
with boundary $\partial W=-M\sqcup N$, where $(M,\alpha)$ is a
strong concave boundary component, and $N$ is a fibre
bundle over $S^1$ with fibre $F=\Sigma\cup C$ such that
$\omega$ restricts to a symplectic form on each fibre. Moreover,
the holonomy of the fibre bundle $N\rightarrow S^1$ (given by the
kernel of $\omega|_{TN}$) is the identity on the subset $C\subset F$
of the fibre.
\end{thm}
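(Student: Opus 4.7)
I would construct $W$ as a union of (a finite piece of) the symplectisation of $(M,\alpha)$ with a product cap modelled on $C$. Topologically, set
\[ W = M\times [0,1]\;\cup\; C\times D^2, \]
where $\partial C\times D^2 = B\times D^2$ (inside $C\times D^2$) is glued to the binding neighbourhood $B\times D^2 \subset M\times\{1\}$ by the identity on $B = \partial C$. Then $\partial W = M\times\{0\}\sqcup N$, where $N = \bigl(M\times\{1\}\setminus B\times D^2\bigr)\cup\bigl(C\times S^1\bigr)$, glued along $B\times S^1 = \partial C\times S^1$. This $N$ fibres over $S^1$ via the $\theta$-coordinate: away from the binding the fibres are the pages $\Sigma$ of the open book, and near the binding they are capped off by $C\times\{\theta\}$, producing closed fibres $F = \Sigma\cup_B C$. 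Since the cap is a product in the $\theta$-direction, the holonomy is trivial along the $C$-part of each fibre.

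\textbf{The symplectic form.} Using the normal form of Proposition~\ref{prop:normal} together with Remark~\ref{rem:h}, I may assume $\alpha = h_1(r)\alpha_B + h_2(r)\,d\theta$ on the binding neighbourhood, with $h_1(r) = c\rme^{-r}$ and $h_2(r)\equiv k$ on an annular zone $r\approx r_0$. On $M\times[0,1]$ I set $\omega_0 := d(\rme^s\alpha)$, so that $M\times\{0\}$ is a strong concave boundary with Liouville field $\partial_s$. Introducing the coordinates $u := s-r$ and $t := \rme^s$, a direct computation on the annular zone gives
\[ \omega_0 = d\bigl(c\rme^u\alpha_B\bigr) + k\,dt\wedge d\theta, \]
a product of the symplectisation $d(c\rme^u\alpha_B)$ of $(B,\alpha_B)$ in the Liouville coordinate $u$ with an area form in $(t,\theta)$. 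Since $(C,\omega_C)$ has $(B,\alpha_B)$ as its strong concave boundary, a collar of $\partial C\subset C$ is symplectomorphic to a slab $\{u_0 - \varepsilon\leq u\leq u_0\}$ of this symplectisation. Via this identification the first summand of $\omega_0$ extends fibrewise from the collar of $B$ to a symplectic form on all of $C$, which is precisely the mechanism needed to attach the cap.

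\textbf{Gluing and main obstacle.} Using this identification I would replace the outer part of the binding tube in $M\times[0,1]$ (where $r\gtrsim r_0$) by $C\times D^2$, equipping the latter with the $2$-form $\omega_C + \widetilde\sigma$, where $\widetilde\sigma$ is an area form on $D^2$ that agrees with $k\,dt\wedge d\theta$ along the gluing locus and extends smoothly to all of $D^2$. The product structure ensures that $\omega$ restricts to $\omega_C$, and hence is symplectic, on every fibre $C\times\{\theta\}$ of the cap; on the rest of $N$, the fibre $\Sigma$ inherits the symplectic form from $d\alpha|_{T\Sigma}$ guaranteed by (ob-i). The main technical obstacle is the careful choice of cut-off functions, of the profile along which one cuts $M\times[0,1]$, and of the interpolation that defines $\widetilde\sigma$, so that the resulting 2-form is globally $C^\infty$-smooth and non-degenerate, the two sides of the cut glue into a smooth fibre bundle $N\to S^1$, and the Liouville field survives near $M\times\{0\}$. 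The exponential shape of $h_1$ in Remark~\ref{rem:h} is engineered precisely so that the substitution $u = s-r$, $t = \rme^s$ turns $\rme^sh_1\alpha_B$ into the standard symplectisation form $c\rme^u\alpha_B$ of $B$, which then matches a collar of $\partial C\subset C$; this is what makes the product-type gluing possible and Eliashberg's strategy go through in the higher-dimensional setting.
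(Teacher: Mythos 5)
Your guiding computation is correct and is in fact the same mechanism the paper uses: with $h_1=c\rme^{-r}$, $h_2\equiv k$ one has $\rme^s\alpha=c\rme^{s-r}\alpha_B+k\rme^s\,d\theta$, so $d(\rme^s\alpha)$ splits as a symplectisation of $(B,\alpha_B)$ times an area form in $(\rme^s,\theta)$, and this is what lets a product cap $(C\times D^2,\omega_C+\text{area form})$ be matched to the symplectisation of $M$. But there is a genuine gap: you only establish this identification on the outer annular zone $r\approx r_0$, and you explicitly defer ("main technical obstacle") exactly the part where the real work lies, namely how the attachment closes up over the binding itself. The cap region necessarily comes down to touch the locus $r=0$ (otherwise the inner tube $B\times D^2_{r_0}\subset M\times\{1\}$ would survive as a boundary piece and $N$ would not be the asserted fibre bundle, and a corner would appear along $B\times S^1$), and there the coordinates $u=s-r$, $t=\rme^s$ and the normal form $h_1=c\rme^{-r}$, $h_2\equiv k$ are no longer valid. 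The paper resolves precisely this: it writes down the embedding $\Phi(s,b,r,\theta)=(s+\log h_1(r),b,f^{-1}(\rme^s h_2(r)),\theta)$ on the \emph{whole} tube $[-1,0]\times B\times D^2$, with $\Phi^*\lambda=\rme^s\alpha$; smoothness at $r=0$ is exactly why Remark~\ref{rem:h} also prescribes $h_1=c\rme^{-r^2}$, $h_2=r^2$ there (together with $f(\rho)=\rho^2$ near $0$), injectivity comes from the contact condition $(h_2/h_1)'<0$, and the gluing locus becomes the graph $\Gamma=\{\rho=g(t)\}$ with $g(t)=\sqrt{-t}$ near $t=0$ (smooth ``paraboloid'' over the binding) and $g\equiv1$ near $t=-1$ (no corner at the tube edge). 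None of this is supplied by your annular-zone computation.

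Relatedly, your proposed fix via ``cut-off functions'' and an interpolated area form $\widetilde\sigma$ is both unverified and unnecessary. You would have to show that the interpolation keeps the $2$-form closed, non-degenerate and symplectic on every fibre of $N$ across the region over the binding, which is not automatic; the paper deliberately avoids any interpolation or corner smoothing by building the model $W_0=\bigl(((-\infty,0]\times B)\cup_B C\bigr)\times D^2$ with the fixed product form $f'(\rho)\,d\rho\wedge d\theta$ on the $D^2$-factor and attaching the part of $W_0$ to the right of $\Gamma$ along the flat boundary piece $B\times D^2\subset M\times\{0\}$, the matching being exact because $\Phi$ is a symplectomorphism. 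Also, your phrasing ``replace the outer part of the binding tube (where $r\gtrsim r_0$) by $C\times D^2$'' misplaces the gluing: nothing of $M\times[0,1]$ is removed, and the delicate region is over $r=0$, not $r\gtrsim r_0$. So the outline is the right one, but the decisive steps — the global embedding $\Phi$, its injectivity and smoothness at the binding, and the corner-free graph $\Gamma$ — are missing rather than merely technical.
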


The condition on $C$ means that there is a Liouville
vector field $Y$ defined near the boundary $\partial C$ and
pointing inwards, such that $(-\partial C,i_Y\omega_C|_{T(\partial C)})$
is diffeomorphic to $(B,\alpha_B)$.

\begin{cor}
Given any closed contact manifold $(M,\xi)$, there is
a compact symplectic manifold $(W,\omega)$ with boundary
$\partial W=-M\sqcup N$ such that $(M,\xi)$ is a strong concave
boundary component and $(N,\omega|_{TN})$ is a 
symplectic fibre bundle over~$S^1$.
\end{cor}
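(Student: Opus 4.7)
The plan is to apply Theorem~\ref{thm:cobordism} with a compact cap for the binding of a supporting open book. By Giroux's theorem, $(M,\xi)$ admits a supporting open book $(B,\theta)$; pick a contact form $\alpha$ adapted to this open book and set $\alpha_B := \alpha|_{TB}$. The missing ingredient is then a compact symplectic manifold $(C,\omega_C)$ whose strong concave boundary is strictly contactomorphic to $(B,\alpha_B)$, i.e., a compact \emph{symplectic cap} for $(B,\alpha_B)$. Every closed contact manifold is known to admit such a cap: in dimension three by Etnyre--Honda, with analogous higher-dimensional constructions in the literature (obtainable, for example, by an induction on dimension that iterates Giroux's theorem and the preceding Theorem~\ref{thm:cobordism}, with the trivial base case of a disjoint union of circles bounding punctured closed surfaces of positive genus).

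Given such a $C$, Theorem~\ref{thm:cobordism} produces $(W,\omega)$ with $\partial W = -M \sqcup N$, where $(M,\alpha)$ appears as a strong concave boundary component and $N \to S^1$ is a symplectic fibre bundle with fibre $F = \Sigma \cup_B C$. Since the page $\Sigma$ and the cap $C$ are both compact manifolds sharing the boundary $B$ with the correct matching of orientations, their union $F$ is a closed $2n$-dimensional manifold. Consequently $N$ is closed; together with the closed $M$ this makes $\partial W$ closed, forcing the symplectic manifold $W$ itself to be compact, as required.

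The main obstacle is the cap construction in the higher-dimensional setting, i.e., showing that an arbitrary closed contact manifold of dimension $\geq 3$ is the concave end of a compact symplectic manifold; once this ingredient is in place, everything else is a direct application of Theorem~\ref{thm:cobordism} together with the trivial observation that gluing two compact manifolds along a common boundary yields a closed manifold.
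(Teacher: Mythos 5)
Your overall strategy is the right one (feed a compact cap for the binding into Theorem~\ref{thm:cobordism}), and your compactness remark at the end is fine, but the step you yourself flag as ``the main obstacle'' is a genuine gap, and the way you propose to fill it does not work. The Etnyre--Honda cap-existence theorem is strictly $3$-dimensional, whereas here the binding $B$ has dimension $2n-1\geq 3$ and may well be $\geq 5$-dimensional; no general higher-dimensional cap-existence result is available to be quoted here, and your sketched induction is not sound: applying Theorem~\ref{thm:cobordism} to $B$ does not produce a cap of $B$ at all, but a cobordism from $B$ to a (generally nonempty, nontrivial) symplectic fibre bundle over $S^1$. To turn that into a cap one would still have to cap off this fibration, and that is precisely the hard part -- in dimension $3$ Eliashberg does it with Lefschetz fibrations over the disc, using special facts about surface mapping class groups, and no analogue is offered in your sketch (the proposed ``base case'' of circles bounding surfaces also does not set up any meaningful induction). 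A further, smaller point: Theorem~\ref{thm:cobordism} asks for a cap whose strong concave boundary is \emph{strictly} contactomorphic to $(B,\alpha_B)$, i.e.\ the induced contact form, not just the contact structure, must match; a black-box cap statement would only give some contact form for $\xi\cap TB$, so one would additionally have to insert a piece of the symplectisation and pass to a graph-like hypersurface to correct this.

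The paper avoids the general cap-existence question altogether: by Giroux \cite{giro02} one may choose the supporting open book so that the page $\Sigma$ is a compact Stein domain with $\partial\Sigma=B$; by Lisca--Mati\'c \cite[Theorem~3.1]{lima97} this Stein domain embeds into a smooth projective variety, and the complement of $\Sigma$ there is a \emph{compact} symplectic cap for $(B,\alpha_B)$ as required by Theorem~\ref{thm:cobordism}. So the missing idea in your proposal is exactly this use of Stein pages plus the Lisca--Mati\'c embedding in place of an unproved general cap-existence theorem; with that ingredient supplied, the rest of your argument goes through as written.
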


\begin{proof}
By \cite{giro02} we may assume that $(M,\xi)$ is supported
by an open book whose page $\Sigma$ is a compact Stein manifold, i.e.\
a compact sublevel set of an exhausting, strictly plurisubharmonic
function on a Stein domain. According
to \cite[Theorem~3.1]{lima97}, this Stein manifold $\Sigma$
embeds into a smooth projective variety. This means, in particular,
that we find a \emph{compact} symplectic cap $(C,\omega_C)$ for
$\Sigma$ as required by Theorem~\ref{thm:cobordism}.
\end{proof}

\begin{proof}[Proof of Theorem~\ref{thm:cobordism}]
Topologically, the definition of $W$ is very simple. Let
$B\times D^2\subset M$ be a neighbourhood of the binding
as in Section~\ref{section:nbhd-B}. Then define
\[ W:= [-2,0]\times M\cup_{B\times D^2} (C\times D^2)\]
(and smooth corners),
where we think of $B\times D^2$ as, on the one hand, a subset
of $M\times\{0\}$ and, on the other, as $\partial C\times D^2$.
(We take a cylinder of height $2$ over $M$ simply to have enough
room for the construction that follows.)

Symplectically, we want to think of $[-2,0]\times M$
as a part of the symplectisation of~$M$, i.e.\ we equip it
with the symplectic form $d(\rme^s\alpha)$. On $C\times D^2$
we take the symplectic form $\omega_C+f'(\rho)\,d\rho\wedge d\theta$,
where $(\rho,\theta)$ denote polar coordinates on~$D^2$, and
$f\co \R_0^+\rightarrow\R_0^+$ is a smooth function with the
following properties:
\begin{itemize}
\item[(f-i)] $f(\rho)=\rho^2$ near $\rho=0$ (so that the form above is smooth),
\item[(f-ii)] $f'(\rho)>0$ for $\rho>0$ (so that the form is symplectic),
\item[(f-iii)] $f(\rho)=\rho$ near $\rho=1$ (this will be relevant later,
see condition (J1) in Section~\ref{section:proof}).
\end{itemize}

For the gluing, we work in the neighbourhood
\[ U:= [-1,0]\times B\times D^2\subset [-2,0]\times M,\]
with polar coordinates $(r,\theta)$ on the $D^2$-factor.
Purely for notational convenience, we shall assume that the contact form
$\alpha$ is given on an open neighbourhood of $B\times D^2$ in $M$
by $h_1(r)\, \alpha_B+h_2(r)\, d\theta$ with
\begin{itemize}
\item[(h-i)] $h_1(r)=\rme^{-r^2}$ and $h_2(r)=r^2$ near $r=0$,
\item[(h-ii)] $h_1(r)=\rme^{-r}$ and $h_2(r)\equiv 1$ near $r=1$.
\end{itemize}
By Remark~\ref{rem:h}, this can always be achieved
(up to some inessential constants).

Both the gluing along parts of the boundary and the
smoothing of corners are awkward processes in the presence of
a symplectic (or any other geometric) structure. To avoid this,
the idea is to construct a model which contains a
symplectic copy both of $C\times D^2$ and $U$, and such that
the identification of $[-2,0]\times M$ and this model along~$U$
(or in fact a small open neighbourhood of it) realises the
desired topological construction.

By the assumptions of the theorem, we obtain a symplectic manifold
by gluing $(-\infty,0]\times B$ and $C$ along $B\equiv
\{0\}\times B\equiv-\partial C$. The model in question is
the product of this manifold with a $D^2$-factor:
\[ (W_0,\omega_0):=\Bigl(\bigl( (-\infty,0]\times B,d(\rme^t\alpha_B)\bigr)
\cup_B
(C,\omega_C)\Bigr)\times \bigl(D^2,f'(\rho)\, d\rho\wedge d\theta\bigr),\]
see Figure~\ref{figure:model}. In that figure, the left part
of the horizontal axis represents $(-\infty,0]\times B$;
the right part, $C$. The vertical axis gives the $\rho$-coordinate
of the $D^2$-factor, so a `realistic' picture is given by
rotating the figure around the horizontal axis.

\begin{figure}[h]
\labellist
\small\hair 2pt
\pinlabel $\rho$ [r] at 352 184
\pinlabel $t=0$ [t] at 361 0
\pinlabel $t=-1$ [t] at 73 0
\pinlabel $C$ [t] at 485 0
\pinlabel ${C\times D^2}$ at 485 80
\pinlabel $\Gamma$ [bl] at 287 108
\pinlabel ${B\times D^2}$ [bl] at 402 186
\pinlabel $\Phi(U)$ [r] at 166 50
\endlabellist
\centering
\includegraphics[scale=0.5]{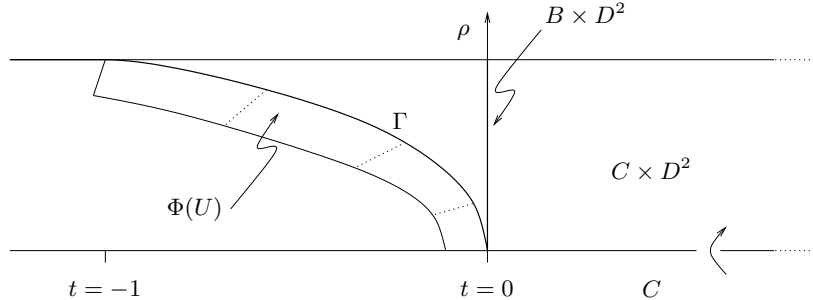} 
  \caption{The model $(W_0,\omega_0)$ for the symplectic gluing.}
  \label{figure:model}
\end{figure}

We claim that we can find a symplectomorphic copy $\Phi(U)$
of $U$ in this model as indicated in the figure. The dotted lines
represent flow lines of the Liouville vector field
$T\Phi(\partial_s)$. The hypersurface $\Gamma$ in the model
is then a strictly contactomorphic copy of $(B\times D^2,\alpha)$.

The trick, as it were, is to think of a neighbourhood
of $B\times D^2$ in $[-2,0]\times M$ not as
a neighbourhood to the left of the horizontal axis in the model
(in which case the gluing would produce
a corner at $\{0\}\times B\times\{\rho=1\}$),
but as a neighbourhood below the `curve'~$\Gamma$ (which
connects smoothly with the curve $\{\rho=1\}$ at $t=-1$).
We can then glue the part of the model to the right of $\Gamma$
in a smooth and symplectic fashion to $[-2,0]\times M$.

Write
\[ \lambda:=\rme^t\alpha_B+f(\rho)\, d\theta\]
for the primitive of $\omega_0$ on $(-\infty,0]\times B\times D^2\subset W_0$.
The corresponding Liouville vector field is
\[ Y=\partial_t+\frac{f(\rho)}{f'(\rho)}\,\partial_{\rho}.\]
Now define
\[ \Phi\co U=[-1,0]\times B\times D^2\longrightarrow
(-\infty,0]\times B\times D^2\subset W_0\]
by
\[ (s,b,r,\theta)\longmapsto (s+\log(h_1(r)),b,f^{-1}(\rme^s h_2(r)),\theta).\]

\begin{lem}
The map $\Phi$ is a symplectic embedding with $\Phi^*\lambda=\rme^s\alpha$
(and hence $T\Phi(\partial_s)=Y$).
\end{lem}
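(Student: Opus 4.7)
The plan is to verify the claim by direct computation, exploiting the specific form of $\lambda$ and of the contact form $\alpha=h_1\alpha_B+h_2\,d\theta$. Writing $\Phi(s,b,r,\theta)=(t,b,\rho,\theta)$ with $t=s+\log h_1(r)$ and $\rho=f^{-1}(\rme^s h_2(r))$, we have the defining identities $\rme^t=\rme^s h_1(r)$ and $f(\rho)=\rme^s h_2(r)$. Substituting these into $\lambda=\rme^t\alpha_B+f(\rho)\,d\theta$ immediately gives
\[ \Phi^*\lambda=\rme^s h_1(r)\,\alpha_B+\rme^s h_2(r)\,d\theta=\rme^s\alpha,\]
which is the content of the Liouville identity. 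Taking the exterior derivative yields $\Phi^*\omega_0=d(\rme^s\alpha)$, the standard symplectisation form on~$U$, so $\Phi$ is symplectic wherever it is an immersion.

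Next I would check that $\Phi$ is an embedding. For injectivity, the $b$ and $\theta$ coordinates are preserved, so it suffices to show that $(s,r)\mapsto(t,\rho)$ is injective. From $\rme^t$ and $f(\rho)$ one recovers the ratio $h_2(r)/h_1(r)$, and since
\[ \Bigl(\frac{h_2}{h_1}\Bigr)'=\frac{h_1h_2'-h_1'h_2}{h_1^2}>0\]
by the contact condition $h_1^{n-1}(h_1h_2'-h_1'h_2)>0$ together with $h_1>0$, this ratio determines~$r$; then $s$ is determined by $\rme^s=\rme^t/h_1(r)$. For the immersion property I would compute the Jacobian of $(s,r)\mapsto(t,\rho)$, whose determinant equals
\[ \frac{\rme^s(h_1h_2'-h_1'h_2)}{h_1f'(\rho)},\]
positive by the contact condition and (f-ii). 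Hence $\Phi$ is a diffeomorphism onto its image, a genuine embedding into $(-\infty,0]\times B\times D^2\subset W_0$ (for the specific normalisation $h_1(0)=1$ chosen in (h-i), (h-ii), so that $t\leq 0$ throughout).

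Finally, for $T\Phi(\partial_s)$ I would differentiate the coordinate expressions of $\Phi$ in~$s$, obtaining
\[ T\Phi(\partial_s)=\partial_t+\frac{\rme^s h_2(r)}{f'(\rho)}\,\partial_{\rho}=\partial_t+\frac{f(\rho)}{f'(\rho)}\,\partial_{\rho}=Y,\]
using $\rme^s h_2(r)=f(\rho)$ in the middle step. This agrees with the Liouville field of~$\lambda$, as it must given $\Phi^*\lambda=\rme^s\alpha$. There is no real obstacle in this lemma; the only mild subtlety is confirming that $(h_2/h_1)$ is genuinely monotone and that the image lies inside the region where $\lambda$ is defined, both of which follow from the contact condition and the normalisations in Remark~\ref{rem:h}.
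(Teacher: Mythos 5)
Your argument is essentially the paper's: the pullback identity $\Phi^*\lambda=\rme^s\alpha$ by direct substitution, and injectivity via strict monotonicity of $h_2/h_1$ (the paper then gets the immersion property for free from symplecticity, whereas you compute the Jacobian of $(s,r)\mapsto(t,\rho)$ explicitly; both work). One small point you skip, which the paper checks first: smoothness of $\Phi$ along $r=0$. There the formula involves $f^{-1}$, which has a square-root singularity at $0$, and polar coordinates degenerate; moreover your Jacobian determinant $\rme^s(h_1h_2'-h_1'h_2)/(h_1 f'(\rho))$ is of the form $0/0$ at $r=0$ (since $h_1'(0)=h_2'(0)=0$ and $f'(0)=0$), so the immersion argument needs a separate word there. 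The fix is the paper's one-line observation that by (f-i) and (h-i) one has $f^{-1}(\rme^sh_2(r))=\rme^{s/2}r$ near $r=0$, so on the disc factor $\Phi$ is just the smooth scaling $r\rme^{\rmi\theta}\mapsto\rme^{s/2}r\rme^{\rmi\theta}$, which is a smooth immersion across the axis. Finally, note that your sign $(h_2/h_1)'>0$ is the one consistent with the contact condition $h_1^{n-1}(h_1h_2'-h_1'h_2)>0$ and $h_1>0$ (the paper writes $(h_2/h_1)'<0$, apparently a slip); either way only strict monotonicity is used, so the injectivity argument is unaffected.
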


\begin{proof}
Near $r=0$ we have $f^{-1}(\rme^s h_2(r))=\rme^{s/2}r$ by (f-i) and
(h-i), so $\Phi$ is smooth.

In order to see that $\Phi$ is injective, assume that we
have
\[ \Phi(s_1,b,r_1,\theta)=\Phi(s_2,b,r_2,\theta).\]
By looking at the first and third component of the image, we see that
\[ \rme^{s_1}h_1(r_1)=\rme^{s_2}h_1(r_2)\;\;\;\text{and}\;\;\;
\rme^{s_1}h_2(r_1)=\rme^{s_2}h_2(r_2),\]
whence
\[ \frac{h_2}{h_1}(r_1)=\frac{h_2}{h_1}(r_2).\]
By the contact condition, we have $(h_2/h_1)'<0$. So this
last equation forces $r_1=r_2$ and hence $s_1=s_2$.

The fact that $\Phi^*\lambda=\rme^s\alpha$ follows from a quick calculation.
In particular, this makes $\Phi$ symplectic, and hence an immersion.
\end{proof}

The submanifold $\Gamma$ in Figure~\ref{figure:model} is defined to be
\[ \Gamma:= \Phi (\{0\}\times B\times D^2).\]
By construction, this is a hypersurface of contact type (transverse
to the Liouville vector field~$Y$), and the induced contact
form $\lambda|_{T\Gamma}$ pulls back to~$\alpha$ under the
embedding~$\Phi$.

\begin{lem}
The hypersurface $\Gamma\subset (-\infty,0]\times B\times D^2$
can be described as a graph
\[ \Gamma=\{ (t,b,\rho,\theta)\in
(-\infty,0]\times B\times D^2\co t\in [-1,0],\; \rho=g(t)\},\]
where
\[ g(t):= f^{-1}(h_2(h_1^{-1}(\rme^t))).\]
\end{lem}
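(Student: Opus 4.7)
The plan is essentially to unpack the defining formula for $\Phi$ at $s = 0$ and then invert in $r$ to present the image as a graph over $t$. Since $\Gamma = \Phi(\{0\}\times B\times D^2)$, setting $s=0$ in
\[ \Phi(s,b,r,\theta) = \bigl(s+\log h_1(r),\, b,\, f^{-1}(\rme^s h_2(r)),\, \theta\bigr) \]
I read off that a generic point of $\Gamma$ has the form $\bigl(\log h_1(r),\, b,\, f^{-1}(h_2(r)),\, \theta\bigr)$ with $r\in[0,1]$, $b\in B$, $\theta\in S^1$. In particular the $b$ and $\theta$ coordinates are left untouched, so whether $\Gamma$ is a graph over $(t,b,\theta)$ depends only on whether the map $r\mapsto (\log h_1(r), f^{-1}(h_2(r)))$ can be reparametrised as $t\mapsto (t,g(t))$.

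The next step is to invert $t = \log h_1(r)$. By the choice of $h_1$ in Remark~\ref{rem:h} and condition (h-i) in the proof of Theorem~\ref{thm:cobordism}, $h_1(0)=1$, and by (h-ii) we have $h_1(1)=\rme^{-1}$; moreover Proposition~\ref{prop:normal} provides $h_1' < 0$ on $(0,1]$. So $h_1$ is a strictly decreasing diffeomorphism $[0,1]\to[\rme^{-1},1]$, which means $t = \log h_1(r)$ is a diffeomorphism $[0,1]\to[-1,0]$ with inverse $r = h_1^{-1}(\rme^t)$. This is exactly the range $t\in[-1,0]$ claimed in the statement.

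Substituting $r=h_1^{-1}(\rme^t)$ into the formula for the $\rho$-coordinate of $\Phi(0,b,r,\theta)$ yields $\rho = f^{-1}\bigl(h_2(h_1^{-1}(\rme^t))\bigr) = g(t)$, which is the desired graph description. Implicit in this we also use that $f$ is a diffeomorphism onto its image (by $f(0)=0$ and $f'>0$, conditions (f-i), (f-ii)), so $f^{-1}\circ h_2$ is smooth; combined with the smoothness of $h_1^{-1}$ this shows $g$ itself is smooth on $[-1,0]$. I expect no genuine obstacle in this lemma: the content is purely bookkeeping about the parametrisation, and all the monotonicity and smoothness needed to justify the inversion is already recorded in Remark~\ref{rem:h} and Proposition~\ref{prop:normal}.
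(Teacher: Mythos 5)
Your argument is the same as the paper's (one-line) proof: set $s=0$ in the formula for $\Phi$, read off $t=\log h_1(r)$ and $\rho=f^{-1}(h_2(r))$, and eliminate $r$; the monotonicity of $h_1$ indeed gives the range $t\in[-1,0]$, so the set-theoretic graph description is correctly established. One incidental claim in your write-up is wrong, though it does not affect the lemma: since $h_1(r)=\rme^{-r^2}$ near $r=0$, we have $h_1'(0)=0$, so $h_1$ is a strictly decreasing homeomorphism but \emph{not} a diffeomorphism at $r=0$, and consequently $g$ is not smooth at $t=0$ --- in fact $g(t)=\sqrt{-t}$ there. The lemma only asserts an equality of sets, so a continuous strictly monotone inverse suffices; the smoothness of the hypersurface $\Gamma$ itself is a separate point, which the paper addresses right after the lemma by noting that $g(t)=\sqrt{-t}$ makes $\Gamma$ a smooth ``paraboloid'' near $t=0$, not by smoothness of $g$.
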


\begin{proof}
The hypersurface $\Gamma$ is given by the $(t,b,\rho,\theta)$ with
\[ t=\log h_1(r)\;\;\;\text{and}\;\;\; \rho=f^{-1}(h_2(r)).\]
This clearly translates into the form in the lemma.
\end{proof}

Note that $g'(t)\leq 0$. Near $t=0$ we have
$g(t)=\sqrt{-t}$ by (f-i) and (h-i). This shows that $\Gamma$
looks like a `paraboloid' near $t=0$. In particular, this
verifies again the smoothness of~$\Gamma$.

Near $t=-1$ we have $g\equiv 1$ by
(h-ii) and (f-iii). This means that $\Gamma$ coincides with
$(-\infty,0]\times B\times\partial D^2$ near $t=-1$. Therefore,
the part of $W_0$ to the right of $\Gamma$, which is
diffeomorphic to $C\times D^2$, can be glued to
$[-2,0]\times M$ along $\{0\}\times B\times D^2$, resulting
in a symplectic manifold $(W,\omega)$
with (smooth) boundary the disjoint union of
$\{-2\}\times M$ and
\[ N:= \Bigl( M\setminus (B\times \Int(D^2))\Bigr) \cup_{B\times S^1}
\Bigl( \bigl( ([-1,0]\times B)\cup_B C\bigr)\times S^1\Bigr).\]
This manifold $N$ fibres in an obvious way over $S^1$, with fibre
given by
\[ (\Sigma\setminus\{r<1\}) \cup ([-1,0]\times B) \cup C,\]
which topologically is just $\Sigma\cup C$. The restriction of $\omega$
to this fibre is given by $d\alpha$ on the first part,
by $d(\rme^t\alpha_B)$ on the second, and by $\omega_C$ on the third.
So the fibre is indeed symplectic. Finally, the symplectic monodromy
of the $S^1$-fibration
\[ (C\times S^1,\omega_C)\longrightarrow S^1\]
is obviously the identity. This completes the proof
of Theorem~\ref{thm:cobordism}.
\end{proof}

\begin{rem}
After this result was presented by one of us (M.~D.)\ in our local
\emph{Arbeitsgemeinschaft}, an alternative proof was found
by M.~Klukas~\cite{kluk12}.
\end{rem}

The open book fibration $\theta\co M\setminus B\rightarrow S^1$
and the projection onto the angular coordinate $\theta$
in the $D^2$-factor of the model $W_0$ (outside $0\in D^2$)
induce a fibration over $S^1$ of a whole collar neighbourhood
of $N$ in $W$. We continue to write $\theta$ for this
fibration and $d\theta$ for the induced $1$-form on this collar.

Recall that a \emph{stable Hamiltonian structure} on a
manifold $N$ of dimension $2n+1$ is a pair $(\Omega,\gamma)$
consisting of a $2$-form $\Omega$ and a $1$-form
$\gamma$ such that $\gamma\wedge\Omega^n\neq 0$ and
$\ker\Omega\subset\ker d\gamma$, see~\cite{cimo05}.
The \emph{Reeb vector field} $R$ of such a stable Hamiltonian structure
is defined by $R\in\ker\Omega$ and $\gamma(R)=1$.
The \emph{symplectisation} of $(\Omega,\gamma)$ is the symplectic manifold
$\bigl((-\varepsilon,\varepsilon)\times N,\Omega+d(\rho\gamma)\bigr)$
with $\varepsilon >0$ sufficiently small.

With $N$ the manifold we just constructed, and
$\omega_N:=\omega|_{TN}$, the pair $(\omega_N,d\theta)$
is an example of such a stable Hamiltonian structure, as
is clear from the symplectic fibration property of~$N$.
The following lemma says that the symplectic form $\omega$
looks like the symplectisation of $(\omega_N,d\theta)$
in a collar neighbourhood of $N$ in~$W$.

\begin{lem}
\label{lem:collar}
There is a collar neighbourhood $(-\varepsilon,0]\times N$
of $N$ in $W$ on which the symplectic form $\omega$ can be written as
\[ \omega = \omega_N+d\rho\wedge d\theta,\]
where, by slight abuse of notation, $\rho$ denotes the collar
parameter.
\end{lem}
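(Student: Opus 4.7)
The plan is to exhibit the collar via the flow of a vector field $Z$ defined on a neighbourhood of $N$ in $W$ by the equation $\iota_Z\omega=d\theta$; this uniquely determines $Z$ because $\omega$ is non-degenerate and $d\theta$ is a well-defined closed $1$-form in a neighbourhood of $N$, as noted in the discussion preceding the lemma.

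The first step would be to verify that $Z$ is transverse to $N$. Let $R$ be the Reeb vector field of the stable Hamiltonian structure $(\omega_N,d\theta)$ on $N$, so $R\in\ker\omega_N$ and $d\theta(R)=1$. Then $\omega(Z,R)=d\theta(R)=1$, whereas for every $v\in TN$ one has $\omega(v,R)=\omega_N(v,R)=0$; hence $Z\notin TN$ at any point of $N$. With the sign conventions chosen so that $Z$ points outward from $W$, the flow $\phi^Z_\rho$ yields, for $\varepsilon>0$ small, a diffeomorphism
\[ \Psi\co (-\varepsilon,0]\times N\lra W,\quad (\rho,n)\longmapsto\phi^Z_\rho(n),\]
onto a collar of $N$ in $W$, with $\Psi_*\partial_\rho=Z$.

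The last step is the Moser-type calculation. Cartan's formula combined with $d\omega=0$ and $d(d\theta)=0$ gives $\mathcal{L}_Z\omega=d(\iota_Z\omega)=0$, so $\omega$ is preserved along the flow of $Z$. Moreover $Z(\theta)=d\theta(Z)=\omega(Z,Z)=0$, so $\theta\circ\Psi$ is independent of $\rho$ and $\Psi^*(d\theta)$ has no $d\rho$-component. Writing $\Psi^*\omega=d\rho\wedge\eta_\rho+\beta_\rho$ in the splitting where neither $\eta_\rho$ nor $\beta_\rho$ contains $d\rho$, flow-invariance forces $\partial_\rho\eta_\rho=\partial_\rho\beta_\rho=0$; contracting with $\partial_\rho$ then gives $\eta_\rho=d\theta$, while restricting to $\rho=0$ gives $\beta_\rho=\omega|_{TN}=\omega_N$. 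This yields $\omega=\omega_N+d\rho\wedge d\theta$ on the collar.

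The only mild obstacle is a consistent choice of signs, ensuring that $Z$ indeed points out of $W$ so that the collar swept out by the flow is parametrised by $(-\varepsilon,0]$ with the stated sign in front of $d\rho\wedge d\theta$; if necessary this is arranged by reversing the orientation of the $S^1$-valued function $\theta$. The geometric content is the standard observation that a hypersurface carrying a stable Hamiltonian structure sits inside any compatible symplectic manifold as the boundary of a local symplectisation.
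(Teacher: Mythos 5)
Your argument is essentially the paper's own proof: define the vector field by $\iota_Z\omega=d\theta$, note $L_Z\omega=0$ and $d\theta(Z)=0$, check transversality to $N$, and read off the normal form from flow-invariance; your uniform transversality check via the Reeb vector field $R$ of $(\omega_N,d\theta)$ is a slight streamlining of the paper's case-by-case argument (explicit identification $Z=\partial_\rho$ near $([-1,0]\times B\cup C)\times S^1$, and the Reeb field $R_\alpha$ with (ob-i) on $M\setminus(B\times D^2)$), and it is correct since $\omega(v,R)=\omega_N(v,R)=0$ for $v\in TN$ while $\omega(Z,R)=d\theta(R)=1$.

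The one point to repair is the sign discussion at the end: you cannot ``reverse the orientation of $\theta$'' to force $Z$ to point outward, because $\theta$ is fixed by the open book and by the model $W_0$, and the form $d\theta$ appears in the statement of the lemma (and in the stable Hamiltonian structure), so flipping it would change what is being proved. Instead, outward-pointing must be verified, and it is immediate from the construction: near the part $C\times S^1$ of $N$ condition (f-iii) gives $\omega=\omega_C+d\rho\wedge d\theta$ with $N$ at the outer radius $\rho=1$, so there $Z=\partial_\rho$ points out of $W$; since $Z$ is transverse to $N$ everywhere, it points outward along all of (the relevant component of) $N$. With that substituted for the ``reverse $\theta$'' remark, the proof is complete and matches the paper's.
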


\begin{proof}
By condition (f-iii), the symplectic form looks as claimed
near the part of $N$ made up of $[-1,0]\times B\times S^1$ and 
$C\times S^1$, with $\rho$ the radial parameter
in the $D^2$-factor. Up to an additive constant, we may take
this $\rho$ as the collar parameter. In that region
$\omega_N$ equals $d(e^t\alpha_B)$ and $\omega_C$, respectively.

Define a non-vanishing vector field $X$ on a neighbourhood
of $N\subset W$ by
\[ X\ip\omega=d\theta.\]
Then $L_X\omega=0$,
so the flow of $X$ leaves $\omega$ invariant. Near the part
of $N$ considered previously we have $X=\partial_{\rho}$.

We claim that $X$ is transverse to $N$ everywhere. By what we have
just observed, this is only an issue near the part 
$M\setminus(B\times D^2)$ of~$N$, where $\omega_N=d\alpha$.
By condition (ob-i),
together with the orientation conditions, the Reeb vector field
$R_{\alpha}$ of $\alpha$ (which coincides up to scale
with the Reeb vector field $R$ of the stable Hamiltonian structure)
is positively transverse to the fibres of the
open book, hence
\[ \omega (X,R)=d\theta(R)>0.\]
On the other hand, if $X$ were tangent to $N\subset\partial W$
at some point of $M\setminus(B\times D^2)\subset N$, then at that point
we would have
\[ \omega (X,R)=\omega_N(X,R)=d\alpha(X,R)=0,\]
so that cannot happen.

Now define the collar parameter $\rho$ by the flow of $X$.
Then, by the defining equation for $X$ and the invariance of $\omega$,
the description of $\omega$ is as claimed in the lemma.
\end{proof}

\section{Proof of Theorem~\ref{thm:main}}
\label{section:proof}
The proof of Theorem~\ref{thm:main} is based on the study of
holomorphic curves in a symplectic cobordism having
$(M,\xi)$ as its concave end. For the construction of this cobordism
we rely on Theorem~\ref{thm:cobordism}, where we made a specific choice
of contact form~$\alpha$. Nonetheless, our main theorem
holds for \emph{any} contact form defining~$\xi$, since up to
a constant factor any such form can be realised on the concave end
of a suitable symplectic cobordism by adding a negative
half-symplectisation to the cobordism we are about to construct
and choosing a suitable graph-like hypersurface in that
cylindrical end, cf.~\cite[Remark~3.10]{geze} or \cite[p.~265]{geze12}.

At some point in the proof of Theorem~\ref{thm:main} we appeal
to a compactness theorem from symplectic field theory, which
presupposes the contact form defining $\xi$ to be non-degenerate.
Again, this is not a restriction on the allowable contact forms,
since the degenerate case follows from the non-degenerate one
by an Arzel\`a--Ascoli type argument as in~\cite{ach05},
cf.~\cite[Remark~6.4]{geze} or \cite[Section~6.4]{geze12}.
\subsection{Construction of the cobordism}
\label{section:cobordism}
By assumption, the binding $(B,\xi\cap TB)$ embeds into a subcritical Stein
manifold as a hypersurface of restricted contact type. According to a
result of Cieliebak~\cite{ciel02}, cf.~\cite[Theorem~14.16]{ciel12}, any such
subcritical Stein manifold is symplectomorphic to a split one
$(V\times\C,J_V\oplus i)$, so we may think of $B$ as a hypersurface in
$V\times\C$. On that split Stein manifold we have a strictly
plurisubharmonic function $\psi=\psi_V+|z|^2/4$, with $\psi_V$
strictly plurisubharmonic on the Stein manifold~$V$. Choose a regular
level set $S:=\psi^{-1}(c)$ of $\psi$ such that $B$ is contained in the
interior of the corresponding sublevel set. Write $A_{BS}$ for the
resulting cobordism between $B$ and~$S$.

The condition on $B$ being of \emph{restricted} contact type tells us that
there is a global Liouville vector field on the symplectic manifold
$V\times\C$ transverse to $B$ and inducing a contact form
for $\xi\cap TB$ there.
(The Liouville vector field necessarily points into $A_{BS}$
along~$B$, see the proof of \cite[Corollary~4.2]{geze12},
but it need not be transverse to~$S$.)
By adding a half-symplectisation along $B$ and taking
a graph in that half-symplectisation --- analogous
to what we said at the beginning of this section ---
we obtain a cobordism (which we continue to call $A_{BS}$)
with an exact symplectic form $d\mu$ such that $\mu =\rme^t\alpha_B$
near $B$, up to constant scale, with $t\in [0,\varepsilon)$, say.
This $d\mu$ is the symplectic form coming from the Stein
structure away from the collar we added to~$B$, but $\mu$ need
not induce a contact structure on~$S$.

In \cite{geze12} we constructed a (compact, but non-closed!) symplectic
cap $(C_{\infty},\omega_{\infty})$ for a hypersurface $S$ of the
type described above. We adapt this construction to the present setting.
Here we shall even work with a non-compact cap. Since
the behaviour of holomorphic spheres near the boundary resp.\ in the
non-compact end is well controlled, this issue is inessential.

Thus, we form the symplectic manifold $(C_{\infty},\omega_{\infty})$ by
compactifying the $\C$-factor in $\psi^{-1}([c,\infty))$ to
a $\CP^1$ with a scaled Fubini--Study form. Note that $C_{\infty}$
contains the complex hypersurface $V_{\infty}:=V\times\{\infty\}$.
We then glue this with the described cobordism to obtain the
symplectic manifold
\[ (C,\omega_C):=(A_{BS},d\mu)\cup_S(C_{\infty},\omega_{\infty})\]
with $(B,\alpha_B)$ as the strong concave boundary. This places
us in the setting of Theorem~\ref{thm:cobordism}.

Moreover, the primitive $\lambda$ of $\omega_0$ on $(-\infty,0]\times
B\times D^2$, introduced in the proof of Theorem~\ref{thm:cobordism},
extends to a primitive on $A_{BS}\times D^2$, which will be
relevant for the subsequent analysis of holomorphic curves
(specifically, the proof of Lemma~\ref{lem:plane}).

Now construct $(W,\omega)$ as in Theorem~\ref{thm:cobordism}, and complete
this symplectic manifold at the negative end, i.e.\ form the
manifold
\[ \wtW:=(-\infty,-2]\times M\cup_M W\]
with symplectic form
\[ \tilde{\omega}:=\begin{cases}
                   d(\rme^s\alpha) & \text{on $(-\infty,-2]\times M$},\\
                   \omega          & \text{on $W$}.
                  \end{cases}
\]

\subsection{The almost complex structure on~$\wtW$}
On the symplectic manifold $(\wtW,\tilde{\omega})$ we choose an almost
complex structure $J$ tamed by~$\tilde{\omega}$. Outside the
collar $[-\varepsilon/2,0]\times N$ and $C_{\infty}\times D^2$, the
almost complex structure may be chosen generically. Inside these
regions, we impose the following conditions. Recall that we write
$R$ for the Reeb vector field of the stable Hamiltonian structure
$(\omega_N,d\theta)$ on the collar $(-\varepsilon,0]\times N$,
i.e.\ $R\ip\omega_N=0$ and $d\theta(R)=1$, see Lemma~\ref{lem:collar}.
Also recall that on $C\times S^1\subset N$ the $S^1$-fibration
of $N$ is the obvious one, and the restriction of $\omega_N$
to the fibres $C$ equals~$\omega_C$.

By $J_{\infty}$ we denote the complex structure on $C_{\infty}$
coming from the inclusion $C_{\infty}\subset V\times\CP^1$.

\begin{itemize}
\item[(J1)] Over $C_{\infty}\times D^2$ we take $J$ to be a split structure
$J_{\infty}\oplus j$, where $j$ is given by
$j(\partial_{\rho})=\partial_{\theta}$
for $\rho\in [1-\varepsilon/2,1]$. (Here we rely on condition~(f-iii).)
\item[(J2)] Over $[-\varepsilon/2,0]\times N$ we take
$J$ to respect the splitting $\ker d\theta\oplus
\bigl\langle\partial_{\rho},R\bigr\rangle$ (so that in particular
the fibres of each $\{\rho\}\times N$ are holomorphic);
on $\bigl\langle\partial_{\rho},R\bigr\rangle$ we require
$J(\partial_{\rho})=R$; on $\ker d\theta$ we choose $J$
compatible with $\omega_N|_{\ker d\theta}$.
\item[(J3)] On the cylindrical end $(-\infty,0]\times M$,
the almost complex structure $J$ is cylindrical
and symmetric in the sense of \cite{behwz03}, i.e.\ it
preserves $\xi=\ker\alpha$ and satisfies $J\partial_s=R_{\alpha}$.
\end{itemize}

Note that on the overlap of the two regions specified in (J1)
and (J2) we have $R=\partial_{\theta}$, so an almost complex structure
satisfying (J1) automatically satisfies (J2). By Lemma~\ref{lem:collar},
an almost complex structure satisfying (J2) is compatible with
$\omega$ on the collar $[-\varepsilon/2,0]\times N$.
\subsection{Holomorphic spheres in $(\wtW,J)$}
We now recall from \cite{geze12} the essential information about
holomorphic spheres in~$C_{\infty}$. As in that paper, we consider a
closed neighbourhood $U_{\partial}$ of the positive end of~$\wtW$,
defined by
\[ U_{\partial}:=\{ x\in V\co\psi_V(x)\geq c\}\times
\CP^1\times D^2\subset C_{\infty}\times D^2\subset\wtW,\]
where $c$ is the level used in the definition of $S=\psi^{-1}(c)$.
Observe that $U_{\partial}$ is foliated by holomorphic spheres
$\{ p\}\times \CP^1\times\{ w\}$.

\begin{lem}
\label{lem:spheres}
Let $u\co \CP^1\rightarrow\wtW$ be a smooth non-constant $J$-holomorphic
sphere.
\begin{itemize}
\item[(i)] If $u(\CP^1)\cap (C_{\infty}\times D^2)\neq\emptyset$,
then $u(\CP^1)\cap(V_{\infty}\times D^2)\neq\emptyset$.
\item[(ii)] If $u(\CP^1)\cap U_{\partial}\neq\emptyset$, then
$u(\CP^1)\subset U_{\partial}$, and $u$ is of the form $z\mapsto
(p,v(z),w)$, where $v$ is some holomorphic covering of $\CP^1$ by
itself.
\item[(iii)] If $u(\CP^1)\subset C_{\infty}\times D^2$, then $u$
is one of the spheres in~{\rm (ii)}.
\end{itemize}
\end{lem}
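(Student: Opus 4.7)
The plan is to exploit condition (J1)---the split structure $J=J_\infty\oplus j$ over $C_\infty\times D^2$---so that each of the three factor projections $\pi_V$, $\pi_{\CP^1}$, $\pi_{D^2}$ of $V\times\CP^1\times D^2$ is $J$-holomorphic on $C_\infty\times D^2$. Combined with the fact that the Stein manifold $V$ admits no non-constant holomorphic sphere (via its strictly plurisubharmonic exhaustion $\psi_V$) and with the boundedness of $D^2\subset\C$, this rigidifies the possible shape of a $J$-holomorphic sphere that enters the cap.

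I would begin with (iii), as it feeds into (ii). If $u(\CP^1)\subset C_\infty\times D^2\subset V\times\CP^1\times D^2$, then $\pi_V\circ u\co\CP^1\to V$ is holomorphic, hence constant at some $p\in V$. So $u$ lands in the fibre $(\{p\}\times\CP^1\times D^2)\cap(C_\infty\times D^2)$, which equals $\{p\}\times\CP^1\times D^2\subset U_\partial$ when $\psi_V(p)\geq c$, and otherwise is a proper closed sub-disc of the $\CP^1$-factor (containing $\infty$) times $D^2$. In the latter case $\pi_{\CP^1}\circ u$ is a holomorphic map $\CP^1\to$ proper sub-disc of $\CP^1$, hence constant (Liouville applied in the coordinate $w=1/z$ near $\infty$), and likewise $\pi_{D^2}\circ u$ is constant, forcing $u$ itself to be constant --- contrary to hypothesis. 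So $\psi_V(p)\geq c$, $u\subset U_\partial$, and (iii) reduces to (ii).

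Next, (ii). Suppose $u$ meets $U_\partial$. I would first show $u(\CP^1)\subset U_\partial$ via a maximum-principle argument: the function $f:=\psi_V\circ\pi_V\circ u$ is subharmonic on $u^{-1}(C_\infty\times D^2)$, takes values $\geq c$ on the non-empty subset $u^{-1}(U_\partial)$, while on the boundary of $u^{-1}(C_\infty\times D^2)$ in $\CP^1$ the map $u$ lies in $S\times D^2$, where $\psi_V\leq c$. Strict plurisubharmonicity of $\psi_V$ --- together with the absence of non-constant holomorphic spheres in strictly pseudoconvex level sets of $\psi_V$ --- rules out the boundary case $f\equiv c$ on a component and forces $u^{-1}(U_\partial)=\CP^1$. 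Once $u\subset U_\partial$, the Stein-ness of $V$ makes $\pi_V\circ u$ constant at some $p$ with $\psi_V(p)\geq c$, the boundedness of $D^2$ makes $\pi_{D^2}\circ u$ constant at some $w$, and the remaining projection $v:=\pi_{\CP^1}\circ u\co\CP^1\to\CP^1$ is then a non-constant (else $u$ would be constant) holomorphic self-map, i.e.\ a branched cover. So $u(z)=(p,v(z),w)$, as claimed.

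Finally, (i) is the most subtle. Arguing by contradiction, suppose $u$ meets $C_\infty\times D^2$ but misses $V_\infty\times D^2$. If $u(\CP^1)\subset C_\infty\times D^2$, then (iii) together with (ii) gives $u(z)=(p,v(z),w)$, and non-constancy of $u$ forces the holomorphic self-map $v\co\CP^1\to\CP^1$ to be surjective, so $u$ must hit $V_\infty\times D^2$ --- a contradiction. Otherwise $D:=u^{-1}(C_\infty\times D^2)$ is a proper open subset of $\CP^1$ whose boundary maps into $S\times D^2$, along which the $\CP^1$-coordinate is uniformly bounded since $|z|^2=4(c-\psi_V)$ and $\psi_V$ is bounded below on the compact set $\{\psi_V\leq c\}$. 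A positivity-of-intersection argument applied to the $J$-complex hypersurface $V_\infty\times D^2$ in the split region, refined by boundary control of $\pi_V\circ u$ and $\pi_{\CP^1}\circ u$ on $D$ as in \cite[Section~3]{geze12}, then yields the contradiction. The hardest part is precisely this last step: making the intersection-theoretic argument precise in the partial-intersection, non-compact-cap setting; I would adapt the Geiges--Zehmisch treatment from \cite{geze12} directly.
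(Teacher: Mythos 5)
Your treatment of (ii) and (iii) is in the spirit of the paper's argument (which is simply the argument of \cite[Lemma~5.2]{geze12} carried over): the split structure (J1) makes the three factor projections holomorphic, the Stein factor $V$ and the bounded factor $D^2$ admit no non-constant holomorphic spheres, and the confinement statements come from the maximum principle applied to plurisubharmonic functions composed with $u$. Two caveats there: the topological boundary of $u^{-1}(C_{\infty}\times D^2)$ need not map only to $S\times D^2$ --- it can also abut $C_{\infty}\times\partial D^2\subset N=\partial\wtW$, where $\psi_V$ is not $\leq c$; this piece has to be dealt with separately (e.g.\ via (J2): a sphere touching $N$ is trapped in a fibre). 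Also, your dismissal of the boundary case $f\equiv c$ by ``absence of holomorphic spheres in level sets of $\psi_V$'' is not an argument: in that case $\pi_V\circ u$ is constant at some $p$ with $\psi_V(p)=c$ and one must still rule out the component exiting through $\{z=0\}$, which again is a maximum-principle statement about $\psi=\psi_V+|z|^2/4$, not about spheres in a level set.

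The genuine gap is in (i), in the case where $u$ is not contained in $C_{\infty}\times D^2$. What you propose there --- ``positivity of intersection with $V_{\infty}\times D^2$, refined by boundary control'' --- is the wrong tool, and you yourself defer the actual argument to \cite{geze12}. Positivity of intersection only says that intersections, \emph{if they occur}, count positively; it cannot force a curve to meet a hypersurface. Since Lemma~\ref{lem:spheres} makes no homological assumption on $u$ (it must apply to arbitrary bubble components), there is no intersection number available to be ``positive'', so no contradiction can come out of that route. The argument that actually works, and which is the analogue of the one cited by the paper, is the same plurisubharmonicity trick you already used in (ii), now with the full exhausting function $\psi=\psi_V+|z|^2/4$: assuming $u$ misses $V_{\infty}\times D^2$, on any component $\Omega$ of $u^{-1}\bigl((C_{\infty}\times D^2)\setminus(S\times D^2)\bigr)$ the function $\psi\circ u$ is subharmonic (split $J$, $\psi$ strictly plurisubharmonic on $V\times\C$), is $>c$ on $\Omega$ and $=c$ on $\partial\Omega$; if $\Omega\neq\CP^1$ this contradicts the maximum principle, and if $\Omega=\CP^1$ you are back in case (iii), where the branched cover $v$ is surjective and hits $V_{\infty}$. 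So the missing idea is to apply the maximum principle to $\psi$ itself rather than to invoke intersection theory.
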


The proof is completely analogous to that of~\cite[Lemma~5.2]{geze12}.
\subsection{The moduli space of holomorphic spheres}
Fix a holomorphic sphere
\[ F:=\{p_0\}\times\CP^1\times\{w_0\}\subset U_{\partial},\]
where the choice is made such that $F$ is not contained in the collar
neighbourhood $[-\varepsilon/2,0]\times N$. Write $\wtMM$ for the moduli
space of smooth $J$-holomorphic spheres $u\co\CP^1\rightarrow\wtW$
representing the class $[F]\in H_2(\wtW)$ and not contained
in $[-\varepsilon/2,0]\times N$. This last condition we impose
to avoid problems with transversality caused by the non-generic
almost complex structure on the collar. The homological
intersection number of $[F]$ with the class in $H_{2n}(\wtW,U_{\partial}\cup
C_{\infty}\times\partial D^2)$ represented by the complex hypersurface
$V_{\infty}\times D^2$ equals~$1$.

In conjunction with Lemma~\ref{lem:spheres} we see that any holomorphic
sphere that touches $U_{\partial}$ or is contained in $C_{\infty}\times D^2$
is of the form $z\mapsto (p,v(z),w)$ for some $v\in\Aut(\CP^1)$.
This tells us that all holomorphic spheres corresponding to
points in $\wtMM$ near its ends are standard.

As in \cite[Proposition~6.1]{geze12}, we have the following statement.

\begin{prop}
The moduli space $\wtMM$ is a smooth manifold of dimension $2n+6$,
and all elements of $\wtMM$ are simple holomorphic spheres.
\end{prop}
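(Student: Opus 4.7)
The plan is to apply the standard framework for simple $J$-holomorphic spheres: I would verify that every element of $\wtMM$ is somewhere injective, check that $J$ is regular along each such element, and conclude that $\wtMM$ is a smooth manifold of the expected parametrised virtual dimension $\dim_{\R}\wtW+2\langle c_1(T\wtW),[F]\rangle$. The dimension computation is immediate: $F$ sits as a $\CP^1$-factor inside the split region $C_\infty\times D^2$, so its normal bundle in $\wtW$ is holomorphically trivial and $\langle c_1(T\wtW),[F]\rangle=\langle c_1(T\CP^1),[\CP^1]\rangle=2$, yielding dimension $(2n+2)+4=2n+6$.

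Simplicity reduces to primitivity of $[F]\in H_2(\wtW;\Z)$. The sphere $F=\{p_0\}\times\CP^1\times\{w_0\}$ meets the closed complex hypersurface $V_\infty\times D^2\subset\wtW$ transversely in the single point $(p_0,\infty,w_0)$, so $[F]\cdot[V_\infty\times D^2]=1$; this intersection number is well defined since the boundary of $V_\infty\times D^2$ sits in the positive-end neighbourhood $U_\partial\cup C_\infty\times\partial D^2$. If some $u\in\wtMM$ factored as $u=\tilde u\circ\varphi$ with $\deg\varphi=k\geq 2$, then $[F]=k[\tilde u]$ would contradict the integrality of intersection numbers; hence every element of $\wtMM$ is simple.

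The main obstacle is the transversality step, since $J$ was specified on three non-generic regions: the split piece $C_\infty\times D^2$, the collar $[-\varepsilon/2,0]\times N$, and the cylindrical end $(-\infty,0]\times M$. For any $u\in\wtMM$ entirely contained in $C_\infty\times D^2$, Lemma~\ref{lem:spheres}(iii) forces $u$ to be one of the standard fibre spheres $z\mapsto(p,v(z),w)$; the split structure~(J1) then decomposes the linearised Cauchy--Riemann operator into its $\bar\partial$-components on $T\CP^1$ and on the trivial normal bundle of rank~$n$, both of which are surjective on $\CP^1$. Containment in the collar is excluded by the very definition of $\wtMM$, while containment in the cylindrical end is excluded by exactness of $d(\rme^s\alpha)$, which prevents any non-constant closed holomorphic curve from living in a symplectisation. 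Any other $u\in\wtMM$ therefore has a non-empty open preimage of the region of generic $J$, which by simplicity contains an injective point of $u$; the standard argument of McDuff--Salamon then yields surjectivity of the linearised operator.
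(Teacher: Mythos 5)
Your overall route is the intended one (the paper itself just defers to \cite[Proposition~6.1]{geze12}): dimension $2n+6$ from $\langle c_1(T\wtW),[F]\rangle=2$, simplicity from $[F]\cdot[V_{\infty}\times D^2]=1$, and explicit regularity of the standard spheres via the split structure (J1). The gap is in your last transversality step. From the three separate non-containment statements --- $u$ not contained in $C_{\infty}\times D^2$, not contained in the collar $[-\varepsilon/2,0]\times N$ (by definition of $\wtMM$), not contained in the cylindrical end (by exactness) --- you conclude that $u$ has an open set of points mapping into the region of generic $J$. That inference is invalid, because the three non-generic regions are not mutually separated: the collar is a neighbourhood of \emph{all} of $N$, so it meets the end $(-\infty,0]\times M$ along the portion $M\setminus(B\times\Int(D^2))$ of $N$, and it meets $C_{\infty}\times D^2$ along the portion $C_{\infty}\times S^1$ of $N$ (where the collar parameter is $\rho\in[1-\varepsilon/2,1]$). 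A sphere could a priori lie entirely inside the union of the three regions --- say mostly in the collar, dipping into the end on one side and into $C_{\infty}\times D^2$ near $\rho=1$ on the other --- without being contained in any single one; for such a sphere your argument provides no regularity at all, since neither the split nor the cylindrical structure applies to it and it never sees the generic region.

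The missing ingredient is exactly the reason the collar condition was built into the definition of $\wtMM$: the class $[F]$ has homological intersection number zero with the fibres of each $\{\rho\}\times N$, and these fibres are $J$-holomorphic by (J2), so by positivity of intersection (\cite[Proposition~7.1]{cimo07}, invoked in the paper for $\MM_{\gamma}$) every sphere in the class $[F]$ is either \emph{disjoint} from the collar or contained in a single fibre of some $\{\rho\}\times N$; the latter is ruled out for elements of $\wtMM$, so they are in fact disjoint from the collar, not merely not contained in it. With this strengthening your argument closes: a sphere of $\wtMM$ not contained in $C_{\infty}\times D^2$ must still meet it by exactness of $\tilde{\omega}$ on the complement, and being disjoint from the collar it can only leave $C_{\infty}\times D^2$ through $S\times D^2$ at radius $\rho<1-\varepsilon/2$, hence it passes through the interior of $A_{BS}\times D^2$ away from the collar, which is precisely where $J$ is generic; there the (already established) simplicity provides injective points and the standard McDuff--Salamon argument gives surjectivity of the linearised operator. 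Please insert this positivity-of-intersection step into your transversality discussion.
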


The spheres in $\wtMM$ being simple implies that the
quotient space
\[ \MM:=\wtMM\times_{\Aut(\CP^1)} \CP^1\]
is a smooth manifold of dimension $2n+2$.
\subsection{Spheres intersecting an arc}
Let $\gamma$ be a proper embedding of the interval $[0,1)$
into $\wtW$ with $\gamma (0)\in F$ and $\gamma(t)\in
(-\infty,-2]\times M$ for $t$ near~$1$. In addition, we require that
the image of $\gamma$ lie in the complement of the collar
neighbourhood $[-\varepsilon/2,0]\times N$. This ensures that the
space
\[ \MM_{\gamma}:=\ev^{-1}(\gamma),\]
where $\ev$ is the evaluation map
\[ \begin{array}{rccc}
\ev\co & \MM   & \lra        & \wtW\\
       & [u,z] & \longmapsto &  u(z),
\end{array}\]
is a smooth $1$-dimensional manifold (with boundary).

Any sphere in $\MM$ represents the class $[F]$, which has
homological intersection number zero with any fibre of~$N$. Thus,
by positivity of intersection~\cite[Proposition~7.1]{cimo07},
any sphere in $\MM$ is either disjoint from the collar
$[-\varepsilon/2,0]\times N$ or contained in a single fibre
of some $\{\rho\}\times N$. Therefore, our choice of $\gamma$
ensures that no sphere in $\MM_{\gamma}$ can touch the collar.

Near the preimage of $\gamma(0)$, the manifold $\MM_{\gamma}$ looks
like a half-open interval, but there are no other boundary points.
In particular, $\MM_{\gamma}$ is non-compact.
\subsection{The conclusion of the argument}
By the non-compactness of $\MM_{\gamma}$ we find a sequence of
points in $\MM_{\gamma}$ without any convergent subsequence.
The compactness theorem from symplectic field
theory~\cite[Theorem~10.2]{behwz03}, however, implies the existence
of a subsequence convergent in the Gromov-Hofer sense to
a holomorphic building of height $k_-|1$. We want to show
that $k_->0$, which is equivalent to saying that the subsequence
is \emph{not} Gromov-convergent to a bubble tree in~$\wtW$.

By construction, the symplectic form $\tilde{\omega}$ is exact on
$\wtW\setminus(C_{\infty}\times D^2)$. Hence, any non-constant sphere
in a purported bubble tree must intersect $C_{\infty}\times D^2$,
and further, by Lemma~\ref{lem:spheres}~(i), each sphere
intersects the complex hypersurface $V_{\infty}\times D^2$.
By positivity of intersection~\cite{cimo07} and the homological
intersection of $F$ with $V_{\infty}\times D^2$ being~$1$, there
can be no non-trivial bubble trees. So, indeed, a non-convergent
sequence in $\MM_{\gamma}$ has to break into a building with
$k_->0$.

The proof of Theorem~\ref{thm:main} is completed with the
following lemma.

\begin{lem}
\label{lem:plane}
The described building contains a finite energy plane with
a positive puncture, corresponding to a contractible
Reeb orbit.
\end{lem}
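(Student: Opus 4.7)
The plan is to analyse the limit building of height $k_-|1$ as a tree of punctured spheres and to rule out all its leaf types except the one the lemma asks for. The key ingredients are elementary tree combinatorics, exactness of $\tilde{\omega}$ away from $V_\infty\times D^2$, and the standard sign analysis of Stokes' theorem for finite-energy planes in a cylindrical symplectisation.

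First, since the pre-limit domains are the closed spheres $\CP^1$, the dual graph of the building---non-trivial components as vertices, matched-puncture pairs as edges---is a tree, and the tree identity applied to the valences $p_i$ of the components yields $\sum(2-p_i)=2$, so there are at least two leaves (components with $p_i=1$).

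Next, I would observe that $\tilde{\omega}$ admits a primitive on $\wtW\setminus(V_\infty\times D^2)$, obtained by combining $\rme^s\alpha$ on the cylindrical end, the Stein primitive of $\omega_\infty$ on $C_\infty\setminus V_\infty$, and $f(\rho)\,d\theta$ on the $D^2$-factor. Main-level components in $\wtW$ have only negative punctures at the $M$-end (there is no positive cylindrical end at $N$), so any such component disjoint from $V_\infty\times D^2$ would satisfy $\int u^*\tilde{\omega}=0$ by Stokes' theorem, the boundary integrals vanishing because $\rme^s\to 0$ at the negative punctures. Hence every non-trivial main-level component meets $V_\infty\times D^2$, and positivity of intersection combined with $[F]\cdot[V_\infty\times D^2]=1$ forces exactly one non-trivial main-level component. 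In a symplectisation level, on the other hand, the cylindrical almost complex structure makes $u^*d\alpha\geq 0$ pointwise, while Stokes' theorem for a finite-energy plane asymptotic to a Reeb orbit $\gamma$ of $R_\alpha$ gives
\[
    0 \;\leq\; \int_\C u^*d\alpha \;=\; \epsilon\, T(\gamma),
\]
with $\epsilon=+1$ for a positive puncture and $\epsilon=-1$ for a negative one; since $T(\gamma)>0$, a non-trivial plane in any $\R\times M$-level must carry a positive puncture.

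Combining these restrictions, at least one of the two (or more) leaves of the tree lies in some symplectisation level $\R\times M$ and has a positive puncture; this is the desired finite-energy plane. Its asymptotic orbit $\gamma$ is contractible in $M$ because $\C$ is simply connected, so the loop $\partial D_R$ bounds a disc in $\R\times M$, and letting $R\to\infty$ together with the projection $\R\times M\to M$ produces a null-homotopy of $\gamma$ in $M$. The principal obstacle is the primitive construction above: carefully unpacking the cap $(C_\infty,\omega_\infty)$ to exhibit a global primitive of $\tilde{\omega}$ on $\wtW\setminus(V_\infty\times D^2)$. The remaining tree combinatorics and sign analysis for punctured spheres in a symplectisation are standard.
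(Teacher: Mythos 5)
Your proof is correct and follows essentially the same route as the paper's: at least two planes in the limit building, exactness plus Stokes to exclude any plane missing $V_{\infty}\times D^2$ in the main level and any negative-puncture plane in a symplectisation level, positivity of intersection with $[F]\cdot[V_{\infty}\times D^2]=1$ to allow at most one exceptional plane, and the capping plane itself providing the null-homotopy of the asymptotic orbit. The only divergence is organisational: the paper uses exactness of $\tilde{\omega}$ only on $\wtW\setminus(C_{\infty}\times D^2)$ (the primitive $\lambda$ extended over $A_{BS}\times D^2$, as noted in Section~\ref{section:cobordism}) together with the assertion that a main-level plane avoiding $V_{\infty}\times D^2$ stays outside $C_{\infty}\times D^2$ altogether, whereas you claim exactness on all of $\wtW\setminus(V_{\infty}\times D^2)$ --- which does hold, since $\omega_{\infty}$ is a split form with exact factors on $C_{\infty}\setminus V_{\infty}$ and the area-form modification in the $\CP^1$-direction can be absorbed into a primitive supported away from~$S$ --- but your sketch of the primitive omits the matching over $A_{BS}\times D^2$ (where the restricted-contact-type hypothesis, i.e.\ the global Liouville field, is exactly what is needed), so that flagged verification is genuinely required and not only a matter of the cap.
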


\begin{proof}
In any holomorphic building of height $k_-|1$ with $k_->0$
there must be at least two finite energy planes. By our intersection
argument, at most one of these can intersect $V_{\infty}\times D^2$.
Any other finite energy plane in the top level $W$
of the building would have to stay outside $C_{\infty}\times D^2$. This,
however, is impossible, because there the symplectic
form $\tilde{\omega}$ is exact (see the comment
in Section~\ref{section:cobordism},
right before the construction of~$\wtW$), so by Stokes's theorem
the Hofer energy of a finite energy plane with a negative end
would be negative, cf.~\cite[Lemma~5.16]{behwz03}.
Likewise, there can be no finite energy plane with a negative
puncture in any of the lower levels of the building,
all of which are copies of the symplectisation of $M$
(and hence exact). We conclude that there must be a
finite energy plane with a
positive puncture in one of the lower levels
$(\R\times M,d(\rme^s\alpha))$ of the building.
\end{proof}
\section{Examples}
In this section we discuss some applications and mild extensions
of our main theorem. The following list is meant to be illustrative rather
than exhaustive.

\vspace{1mm}

(1) The contrapositive of our theorem allows us to draw topological
conclusions. Given a contact manifold $(M,\xi=\ker\alpha)$ with
$R_{\alpha}$ not having any contractible periodic Reeb orbit,
one can deduce that $\xi$ is not supported by an open book
with subcritical pages. Examples are provided by the unit
cotangent bundle of any $n$-torus $\R^n/\Lambda$ obtained as a
quotient of $\R^n$ with the euclidean metric, since the
Reeb flow on the unit cotangent bundle of any Riemannian manifold
coincides with the cogeodesic flow, cf.~\cite[Theorem~1.5.2]{geig08}.

\vspace{1mm}

(2) In our construction, $A_{BS}$ was a symplectic cobordism with
an exact symplectic form. Under the weaker assumption that $A_{BS}$
is any semi-positive symplectic cobordism, one can still infer the
existence of at least a nullhomologous Reeb link as follows.

As in the proof of Theorem~\ref{thm:main}, we need only
exclude Gromov-convergent sequences. Any potential bubble tree
arising as a Gromov limit of a sequence in $\MM_{\gamma}$ has to
stay outside the collar $[-\varepsilon/2,0]\times N$ by an intersection
argument. Indeed, at least one sphere in the bubble intersects $\gamma$
and hence is not contained entirely in the collar. If the bubble
tree were to intersect the collar, there would be at least one
sphere having positive intersection with a fibre of (a copy of)~$N$.
On the other hand, we can obviously find a different fibre
(in a different copy of~$N$) having empty intersection with the
bubble tree.

Provided we can show that the cobordism $\wtW\setminus (C_{\infty}
\times D^2)$ is semi-positive, we may then appeal to
\cite[Section~6.3]{geze12} to rule out non-trivial bubble trees.

For $n=2$, semi-positivity of this $6$-dimensional cobordism is
automatic. For $n\geq 3$, we argue as follows. Recall that
\[ \wtW\setminus (C_{\infty}\times D^2)=(-\infty,0]\times M
\cup_{B\times D^2}(A_{BS}\times D^2),\]
which is homotopy equivalent to $M\cup_{B\times D^2}(A_{BS}\times D^2)$.
We have
\[ H_2(M\setminus B\times \Int(D^2),B\times S^1)\cong
H^{2n-1}(M\setminus B\times \Int(D^2))=0\;\;\text{for $n\geq 3$},\]
since $M\setminus B$ fibres over $S^1$ with fibres having the homotopy type
of an $n$-dimensional complex. This implies that any class in
$H_2(\wtW\setminus (C_{\infty}\times D^2))$ is homologous to
one in $A_{BS}\times D^2\simeq A_{BS}$. Moreover, $A_{BS}$ is
a complex hypersurface in $A_{BS}\times D^2$, so the values
of $[\tilde{\omega}]$ and $c_1(\wtW,J)$, respectively, on any
homology class in $H_2(\wtW\setminus (C_{\infty}\times D^2))$
can be computed inside $A_{BS}$, where semi-positivity holds.

\vspace{1mm}

(3) A Lagrangian embedding of the $2$-sphere into $\C^2$ blown up in
two points can be defined as follows, cf.~\cite[Example~2.14]{seid08}.
Start with the Lagrangian cylinder
\[ \{ (r,\rme^{\rmi\varphi})\in\C^2\co r\in [0,3],\; \varphi\in[0,2\pi)\}.\]
Now form a symplectic blow-up by cutting out
the open unit ball in $\C^2$ centred at $(0,0)$ and collapsing
the fibres of the Hopf fibration on the boundary sphere.
We maintain that the cylinder descends to a Lagrangian disc
in the blown-up manifold. A similar blow-up centred at $(3,0)$
then produces a Lagrangian $2$-sphere in $\C^2\#\overline{\CP}^2
\#\overline{\CP}^2$.

The most convenient way to see that the cylinder descends to a smooth disc
in the blown-up manifold is Lerman's description of a blow-up as a
symplectic cut~\cite{lerm95}. That is, on $\C^2\times \C$ with coordinates
$(z_1,z_2,w)$ and standard symplectic form we consider the
$S^1$-action given by
\[ \rme^{\rmi\theta}(z_1,z_2,w)=(\rme^{\rmi\theta}z_1,\rme^{\rmi\theta}z_2,
\rme^{-\rmi\theta}w).\]
The function $\psi\co\C^2\times\C\rightarrow\R$ given by
$\psi(z_1,z_2,w)=|z_1|^2+|z_2|^2-|w|^2$ is $S^1$-invariant,
and so is the symplectic form.
The blown-up manifold is the smooth symplectic quotient of the level set
$\psi^{-1}(1)$ under the free $S^1$-action.

The isotropic cylinder 
\[ \{ (r,\rme^{\rmi\varphi},r)\in\C^2\times\C
\co r\in [0,3],\; \varphi\in[0,2\pi)\}\subset\psi^{-1}(1) \]
descends to a disc in the blown-up manifold which is Lagrangian
outside its centre. Provided the disc is smooth at the centre,
it will be Lagrangian by continuity. We claim that
\[ (r,\varphi)\longmapsto [r:\rme^{\rmi\varphi}:r]\]
defines a smooth parametrisation of the disc.
Beware that the `homogeneous' coordinates here refer to the quotient
under the $S^1$-action only. Indeed, we have
\[ [r:\rme^{\rmi\varphi}:r]=[r\rme^{-\rmi\varphi}:1:r\rme^{\rmi\varphi}],\]
so the parametrisation can be rewritten as $z\mapsto [\oz:1:z]$.

Now, by Weinstein's neighbourhood theorem for Lagrangian
submanifolds~\cite{wein71},
a small cotangent disc bundle of $S^2$ embeds symplectically into
$\C^2\#\overline{\CP}^2\#\overline{\CP}^2$. This gives us
a symplectic cobordism from $\RP^3$ to $S^3$ (with their standard contact
structures), which is semi-positive for dimensional reasons.

Then Example~(2) applies to show that the contact manifold
obtained from an open book whose page is the cotangent disc bundle of $S^2$,
and whose monodromy is a $k$-fold Dehn twist $\tau^k$, $k\in\Z$,
carries a nullhomologous Reeb link. For $k\geq 0$ the resulting manifolds
are the $5$-dimensional Brieskorn manifolds $\Sigma(k,2,2,2)$
with their natural contact structure,
see~\cite{koni05}; for $k<0$ we obtain a contact structure
on a diffeomorphic copy of $\Sigma(|k|,2,2,2)$.
Since these manifolds are simply connected, see~\cite{hima68},
we obviously get a contractible periodic Reeb orbit.
The reader may wish to compare this with the results
of van Koert~\cite{koer08}.

It was shown by Seidel~\cite[Proposition~2.4]{seid08} that
the Dehn twists $\tau^k$ for different $k\in\Z$ are not
symplectically isotopic (with compact support), although
$\tau^2$ is topologically isotopic to the identity.

\vspace{1mm}

(4) A Lagrangian embedding of the $3$-sphere into $\CP^1\times\C^2$
can be defined by
\[ \C^2\supset S^3\ni(z_1,z_2)\longmapsto\bigl([z_1:z_2],(\oz_1,\oz_2)\bigr)
\in\CP^1\times\C^2,\]
see~\cite[Example~2.2.8]{alp94}.
Analogous to Example~(3), and with $DT^*S^3$ denoting a small
cotangent disc bundle of~$S^3$ and $B\subset\C^2$ a large ball,
we can build the symplectic cap
\[ C:=\bigl((\CP^1\times B)\setminus DT^*S^3\bigr)\cup
\bigl((\CP^1\times\C^2)\setminus(\CP^1\times B)\bigr).\]
Over the first part of $C$ (crossed with~$D^2$)
we allow ourselves a generic choice of
almost complex structure, on the second part we choose the obvious
complex structure, which gives us the holomorphic spheres in
the cap.

The intersection argument that we used in the proof of our main theorem
in order to show that the spheres in our moduli space are simple
no longer applies, since the hypersurface $\{\infty\}\times\C^2$
is not contained in~$C$. Instead however, we may reason
as in Example~(2) that a class $H_2(\wtW)$ can
be regarded as a class in $H_2(C)$. From
$\tilde{\omega}([F])=\omega_{\FS}([F])=\pi$ (with $F=\CP^1\times *$),
and the observation that $\tilde{\omega}$ takes values in $\pi\Z$ on
spherical classes, we conclude that spheres in the class $[F]$ are simple.
Also as in Example~(2) we see that semi-positivity (which is automatic on
$C$ for dimensional reasons) carries over to the relevant $8$-dimensional
cobordism.

Thus, as in Example~(3) we see that the open book with page the cotangent
disc bundle of $S^3$ and monodromy a $k$-fold Dehn twist $\tau^k$
supports a contact structure with a contractible periodic Reeb orbit.
The resulting manifolds, analogous to~(3), are the Brieskorn manifolds
$\Sigma (|k|,2,2,2,2)$. These have homology $H_3\cong\Z_{|k|}$,
cf.~\cite[p.~37]{koer05}. Hence,
in this dimension the Dehn twists $\tau^k$ for different $k\in\Z$
are not even topologically isotopic.
\begin{ack}
We are grateful to Sam Lisi for questions and suggestions that
prompted the present paper.
We thank Emmanuel Giroux for indicating to us how to prove
Proposition~\ref{prop:normal}, and Fan Ding for help with
a technical point in that proof.
We thank Janko Latschev for
useful discussions. Some parts of this research 
were done during the workshop
`From Conservative Dynamics to Symplectic and Contact Topology'
at the Lorentz Center, Universiteit Leiden, in August 2012.
We thank the Lorentz Center and its efficient staff for providing an
excellent research environment.
\end{ack}


\begin{thebibliography}{10}
%
\bibitem{ach05}
{\sc C. Abbas, K. Cieliebak and H. Hofer},
The Weinstein conjecture for planar contact structures in dimension three,
\emph{Comment. Math. Helv.}
{\bf 80} (2005), 771--793.
%
\bibitem{alp94}
{\sc M. Audin, F. Lalonde and L. Polterovich},
Symplectic rigidity: Lagrangian submanifolds,
\emph{Holomorphic Curves in Symplectic Geometry},
Progr. Math. {\bf 117}
(Birkh\"auser, Basel, 1994), 271--321.
%
\bibitem{behwz03}
{\sc F. Bourgeois, Ya. Eliashberg, H. Hofer, K. Wysocki
and E. Zehnder},
Compactness results in symplectic field theory,
{\it Geom. Topol.}
{\bf 7} (2003), 799--888.
%
\bibitem{ciel02}
{\sc K. Cieliebak},
Subcritical Stein manifolds are split,
preprint (2002), {\tt arXiv:~math/0204351}.
%
\bibitem{ciel12}
{\sc K. Cieliebak and Ya. Eliashberg},
\emph{From Stein to Weinstein and Back: Symplectic Geometry of Affine
Complex Manifolds},
Amer. Math. Soc. Colloq. Publ. {\bf 59}
(American Mathematical Society, Providence, RI, 2012).
%
\bibitem{cimo05}
{\sc K. Cieliebak and K. Mohnke},
Compactness for punctured holomorphic curves,
\emph{J. Symplectic Geom.}
{\bf 3} (2005), 589--654.
%
\bibitem{cimo07}
{\sc K. Cieliebak and K. Mohnke},
Symplectic hypersurfaces and transversality in Gromov--Witten theory,
{\it J. Symplectic Geom.}
{\bf 5} (2007), 281--356.
%
\bibitem{elia04}
{\sc Ya. Eliashberg},
A few remarks about symplectic filling,
\emph{Geom. Topol.}
{\bf 8} (2004), 277--293.
%
\bibitem{geig08}
{\sc H. Geiges},
\emph{An Introduction to Contact Topology},
Cambridge Stud. Adv. Math. {\bf 109}
(Cambridge University Press, Cambridge, 2008).
%
\bibitem{geze}
{\sc H. Geiges and K. Zehmisch},
How to recognise a $4$-ball when you see one,
preprint (2011), {\tt arXiv:~1104.1543}.
%
\bibitem{geze12}
{\sc H. Geiges and K. Zehmisch},
Symplectic cobordisms and the Weinstein conjecture,
\emph{Math. Proc. Cambridge Philos. Soc.}
{\bf 153} (2012), 261--279.
%
\bibitem{giro91}
{\sc E. Giroux},
Convexit\'e en topologie de contact,
\emph{Comment. Math. Helv.}
{\bf 66} (1991), 637--677.
%
\bibitem{giro02}
{\sc E. Giroux},
G\'eom\'etrie de contact: de la dimension trois vers les dimensions
sup\'erieures,
\emph{Proceedings of the International Congress of Mathematicians,
Vol.~II} (Higher Education Press, Beijing, 2002),
405--414.
%
\bibitem{hima68}
{\sc F. Hirzebruch and K. H. Mayer},
\emph{$\mathrm{O}(n)$-Mannigfaltigkeiten, exotische Sph\"aren und
Singularit\"aten},
Lecture Notes in Math.
{\bf 57} (Springer, Berlin, 1968).
%
\bibitem{hofe93}
{\sc H. Hofer},
Pseudoholomorphic curves in symplectizations with applications to the
Weinstein conjecture in dimension three,
\emph{Invent. Math.}
{\bf 114} (1993), 515--563.
%
\bibitem{hutc10}
{\sc M. Hutchings},
Taubes's proof of the Weinstein conjecture in dimension three,
\emph{Bull. Amer. Math. Soc. (N.S.)}
{\bf 47} (2010), 73--125.
%
\bibitem{kluk12}
{\sc M. Klukas},
Open books and exact symplectic cobordisms,
preprint (2012), {\tt arXiv: 1207.5647}.
%
\bibitem{koer05}
{\sc O. van Koert},
Open books for contact $5$-manifolds and applications of contact
homology, Inaugural-Dissertation, Universit\"at zu K\"oln (2005).
%
\bibitem{koer08}
{\sc O. van Koert},
Contact homology of Brieskorn manifolds,
\emph{Forum Math.}
{\bf 20} (2008), 317--339.
%
\bibitem{koni05}
{\sc O. van Koert and K. Niederkr\"uger},
Open book decompositions for contact structures on Brieskorn manifolds,
\emph{Proc. Amer. Math. Soc.}
{\bf 133} (2005), 3679--3686.
%
\bibitem{lerm95}
{\sc E. Lerman},
Symplectic cuts,
\emph{Math. Res. Lett.}
{\bf 2} (1995), 247--258.
%
\bibitem{lima97}
{\sc P. Lisca and G. Mati\'c},
Tight contact structures and Seiberg--Witten invariants,
\emph{Invent. Math.}
{\bf 129} (1997), 509--525.
%
\bibitem{mcdu91}
{\sc D. McDuff},
Symplectic manifolds with contact type boundaries,
{\it Invent. Math.}
{\bf 103} (1991), 651--671.
%
\bibitem{pasq12}
{\sc F. Pasquotto},
A brief history of the Weinstein conjecture,
\emph{Jahresber. Deutsch. Math.-Verein.},
to appear.
%
\bibitem{seid08}
{\sc P. Seidel},
Lectures on four-dimensional Dehn twists,
\emph{Symplectic $4$-Manifolds and Algebraic Surfaces},
Lecture Notes in Math.
{\bf 1938} (Springer, Berlin, 2008), 231--267.
%
\bibitem{taub07}
{\sc C. H. Taubes},
The Seiberg--Witten equations and the Weinstein conjecture,
\emph{Geom. Topol.}
{\bf 11} (2007), 2117--2202
%
\bibitem{wein71}
{\sc A. Weinstein},
Symplectic manifolds and their Lagrangian submanifolds,
\emph{Adv. Math.}
{\bf 6} (1971), 329--346.
%
\bibitem{wein79}
{\sc A. Weinstein},
On the hypotheses of Rabinowitz' periodic orbit theorems,
\emph{J. Differential Equations}
{\bf 33} (1979), 336--352.
%
\end{thebibliography}
\end{document}